\documentclass[11pt]{amsart}
\usepackage{amsmath, amsrefs, amsthm, amssymb}
%\usepackage{showkeys}
%\usepackage[active]{srcltx}		%reversed
%search for a dvi file
\usepackage{pdfsync}					%reversed
%search for a pdf file
\setlength{\textwidth}{15cm}
\setlength{\oddsidemargin}{0cm}
\setlength{\evensidemargin}{0cm}
\setlength{\topmargin}{0cm}
\setlength{\textheight}{22cm}
\linespread{1.1} % baselineskip : default is {1}
\newcommand{\dd}{\,{\rm d}}

%%%%%%%%%%%%%%%%%%%%%%%%%

\newcommand\R{{\mathbb{R}}}

\newcommand\Z{{\mathbb{Z}}}

\renewcommand\S{{\mathbb{S}}}

%%%%%%%%%%%%%%%%%%%%%%%%%%%

\newtheorem{theorem}{Theorem}[section]

\newtheorem{lemma}[theorem]{Lemma}
\newtheorem{corollary}[theorem]{Corollary}

\theoremstyle{definition}

%%%%%

 % "letter-numbered" assumptions

%\def\cbdu{\par{\raggedleft$\Box$\par}}

\theoremstyle{remark}
\newtheorem{remark}[theorem]{Remark}

\numberwithin{equation}{section}

\def\R{\mathbb{R}}

%%%%%%%%%%%%%%%%%%%%%%%%%%%%%%%%%%%%

\begin{document}

\title[Generalized hyperelastic rod equation]
{Blowup issues for a class of nonlinear dispersive wave equations}

\author{Lorenzo Brandolese and Manuel Fernando Cortez}
\thanks{The authors are supported by the French ANR Project DYFICOLTI. The second author is also supported by the 
{\it Secretar\'\i a Nacional de Educaci\'on Superior, Ciencia, Tecnolog\'\i a e Innovaci\'on\/}}

\address{Universit\'e de Lyon, Universit\'e Lyon 1,
CNRS UMR 5208 Institut Camille Jordan,
43 bd. du 11 novembre,
Villeurbanne Cedex F-69622, France.}
\email{brandolese{@}math.univ-lyon1.fr, cortez@math.univ-lyon1.fr}
\urladdr{http://math.univ-lyon1.fr/$\sim$brandolese}

\keywords{Rod equation, Camassa--Holm, shallow water, Wave breaking}

\begin{abstract}
In this paper we consider  the nonlinear dispersive wave equation  on the real line,
$u_t-u_{txx}+[f(u)]_x-[f(u)]_{xxx}+\bigl[g(u)+\frac{f''(u)}{2}u_x^2\bigr]_x=0$,
that for appropriate choices of the functions~$f$ and~$g$ includes well known models,
such as Dai's equation for the study of vibrations inside elastic rods or the Camassa--Holm equation
 modelling water wave propagation in shallow water.
 We establish a \emph{local-in-space}  blowup criterion (\emph{i.e.}, a criterion 
 involving  only the properties of the data~$u_0$ in a neighbourhood of 
 a~\emph{single point}) simplifying and extending earlier blowup criteria
 for this equation. Our arguments apply both to the finite and infinite energy case,  yielding the finite time blowup
 of strong solutions with possibly different behavior as $x\to+\infty$ and $x\to-\infty$.
\end{abstract}

 \maketitle

\section{Introduction}

The experimental observation by the naval architect Scott Russel
of solitary waves propagating in channels at different speeds, and 
interacting in a nonlinear way before recovering  their initial shape,
motivated the studies on the mathematical modelling of  water wave motion at the end of the XIX  century.
The first works can be retraced back to Boussinesq, Rayleigh, Korteweg and  de Vries.
The celebrated KdV equation allows for a first mathematical description of such phenomena.
This equation can be derived as an asymptotic model from the free surface Euler equations in the so called shallow water regime 
$\mu=h^2/\lambda^2<\!\!\!<1$, where $h$ and $\lambda$ denote respectively the average elevation of the liquid over the bottom and the characteristic wavelengt. It models small amplitude waves, {\it i.e.\/} waves such that the dimensionless amplitude
parameter  $\epsilon=a/h$ satisfies $\epsilon=O(\mu)$, where $a$ is the typical amplitude.

Such small amplitude waves feature both nonlinear and dispersive effects. For larger amplitude waves
nonlinear effects become preponderant and wave breaking can eventually occur.
As the KdV equation is no longer suitable for the description of breaking mechanisms --- its solutions remain smooth for all time ---
a considerable effort was made toward the modelling of larger amplitude, possibly breaking waves, see, {\it e.g.\/}, the monograph~\cite{Whit99}.
Such studies culminated with the derivation in 1993, by Camassa and Holm,~\cites{CamHol93, CamHolHym94}
of an equation  obtained from the vertically averaged water wave system, written in Lie-Poisson Hamiltonian form, by
an asymptotic expansion preserving the Hamiltonian structure.
The scaling of validity of the Camassa--Holm equation is $\mu<\!\!\!<1$ and $\epsilon=O(\sqrt \mu)$: such scaling includes 
that of KdV allowing higher order accuracy.
Alternative rigorous derivations of the Camassa--Holm equation are also available, see~\cites{AConL09, John02}.
Such equation attracted a considerable interest in the past 20 years, not only due its hydrodynamical relevance (it was the first
equation capturing both soliton-type solitary waves as well as breaking waves) but also because of its extremely rich
mathematical structure.
In fact,  the Camassa--Holm equation was written for the first time in a different context, as one of the 12 integrable equations classified by Fokas and Fuchsteiner~\cite{FuchFok81} and obtained from a nonlinear operator satisfying suitable defining properties, applying a recursive operator that is an hereditary symmetry.

The Camassa--Holm equation is usually written as
\begin{equation}
 \label{CH}
 u_t+\kappa u_x-u_{xxt}+3uu_x=uu_{xxx}+2u_xu_{xx}, \qquad x\in\R,\quad t>0.
\end{equation}
where $u$ can be interpreted as an horizontal velocity of the water at a certain depth and $\kappa$ is the dispersion parameter.

The dispertionless case~$\kappa=0$ is of mathematical interest as in this case the equation possess soliton solutions
(often named {\it peakons\/}) peaked at their crest, of the form $u_c(t,x)=ce^{-|x-ct|}$. Multi-peakon interactions is studied in 
\cites{CamHol93, CamHolHym94}.
For $\kappa>0$ the equation admits smooth solitons.

In the shallow water interpretation,  however, $\kappa$ is proportional to the square root of the water depth and cannot be zero.
On the other hand, the same equation, with~$\kappa=0$ appears, 
{\it e.g.\/}, in the study of the dynamics of a class of non-Newtonian, second-grade fluids (see \cite{Bus99}), or
when modelling vibrations inside hyper-elastic rods.
In the latter case peakons correspond to physical solutions.
More in general,  the propagation of nonlinear waves inside cylindrical hyper-elastic rods, assuming that the diameter is small when compared to the axial length scale, is described by the one dimensional equation (see \cite{Dai98}),
\[
v_\tau+\sigma_1 v v_\xi+\sigma_2 v_{\xi\xi\tau}+\sigma_3(2v_\xi v_{\xi\xi}+vv_{\xi\xi\xi})=0,
\qquad
\xi\in\R, \; \tau>0.
\]
Here $v(\tau,\xi)$ represents the radial stretch relative to a pre-stressed state, $\sigma_1\not=0$, $\sigma_2<0$ and
$\sigma_3\le0$ are physical constants depending on the material.
The scaling transformations
\[
\tau=\frac{3\sqrt{-\sigma_2}}{\sigma_1}t,\qquad
\xi=\sqrt{-\sigma_2}x,
\]
with $\gamma=3\sigma_3/(\sigma_1\sigma_2)$ and $u(t,x)=v(\tau,\xi)$,
allow us to reduce the above equation to
\begin{equation}
\label{rod-pde}
u_t-u_{xxt}+3uu_x=\gamma(2u_xu_{xx}+uu_{xxx}),
\qquad x\in\R,\;t>0.
\end{equation}
Notice that when the physical parameter $\gamma$ (related to the Finger deformation tensor) is equal to~$1$, one recovers the
dispersionless Camassa--Holm equation. Several positive or
negative values of~$\gamma$ correspond to known hyper-elastic materials.

Common important features of the Camassa--Holm and the rod equation include:
\begin{itemize}
 \item[-] The conservation of the energy integral $\int (u^2+u_x^2)$ for classical and sufficiently decaying solutions.
 \item[-] The local well-posedness theory: the Cauchy problems for equation~\eqref{CH} and~\eqref{rod-pde} are well-posed in
 the Sobolev space $H^s(\R)$ for $s>3/2$ (or in suitable Besov spaces), locally in time.
See, {\it e.g.}, \cites{ConStra00, Dan01, LiOlver00, Yin04}.
 \item[-] Wave breaking scenario: the maximal existence time $T^*$ such that the solution belongs to 
 $C([0,T^*),H^s(\R))\cap C^1([0,T^*),H^{s-1}(\R))$
 is finite if and only if $u_x$ is unbounded from below (or from above when $\gamma<0$) near the blowup time. Up to the time $T^*$,
 the solution remains uniformly bounded. See \cites{CamHol93, ConStra00, Yin04}. 
 \item[-] Finite time blowup criteria on the initial data and upper bound estimates on $T^*$.
 See \cites{ConEschActa, ConStra00, McKean04, Liu-MathAnn06, Yin04, Zhou2004, Zhou-Math-Nachr}.
 \item[-] Exponentially decaying solitary wave solutions.
 See, {\it e.g.\/} \cites{CamHolHym94, Dai98, Dai-Huo,  Lenells-DCDS06, Yin04}.
 \item[-] Infinite propagation speed and persistence results in weighted spaces. See \cites{Bra11, HMPZ07}.
 \item[-] The existence of global conservative or dissipative weak solutions. 
 See \cites{BreCon07, BreCon07bis, HolRay07, HolJDE07bis}.
\end{itemize}

Beside such common features, the Camassa--Holm equation is considerably better understood than the rod equation.
Indeed, contrary to the rod equation with $\gamma\not=1$, equation~\eqref{CH} 
possess a bi-hamiltonian structure that makes the equation formally integrable via the inverse scattering method.
Elegant geometric interpretations (see \cites{ConKol03, ConMc, Kol07, Misio98}) are available: for example, 
equation~\eqref{CH} gives rise to a geodesic flow of a right invariant metric on the Bott-Virasoro group.
This equation also admits infinitely many conservation laws. Moreover, it has solitary waves interacting 
like solitons that are orbitally stable, see~\cite{AConS00}.
 The global existence of strong solutions of the Camassa--Holm equation can be obtained putting suitable sign conditions on the associated initial potential $y_0=u_0-(u_0)_{xx}$, see \cites{ACon00, McKean04}. When $\gamma\not=1$, as the sign of the potential
is no longer conserved by the flow, such global existence criterion is no longer valid. In fact, we know of no general condition on the initial datum guaranteeing that the corresponding solution of the rod equation remains in~$H^s$ ($s>3/2$) for all time.
The value of the parameter~$\gamma$ plays a crucial role: two limit situations occur when~$\gamma=0$ and $\gamma=3$.
In the latter case {\it any\/} non-zero solution  eventually develops a singularity (see \cite{ConStra00}). 
On the other hand, when~$\gamma=0$, no blowup can occur. (When $\gamma=0$, the rod 
equation boils down to the BBM equation, a model 
for the unidirectional evolution of long waves, \cite{BBM72}).
Physically, the formation of a singularity corresponds to a formation of a crack inside the rod.
Both behaviors in the two limit cases $\gamma=0$ and $\gamma=3$ are not physically realistic for real rods, but 
it is interesting to observe that there are known materials such that~$\gamma$ is indeed close to~$3$ ({\it e.g.\/}, $\gamma=2.668$)
and others such that~$\gamma$ is close to zero ({\it e.g.\/} $\gamma=-0.539$).
We refer to~\cite{Dai-Huo} for a list of physically acceptable values of~$\gamma$ (ranging from~$-29.476$ to $3.417$).

 \section{Blowup for the generalized hyper-elastic rod equation}

In this paper we will consider the Cauchy problem for a nonlinear dispersive wave equation including
both~\eqref{CH} and \eqref{rod-pde} as a particular case: 
\begin{equation}
 \label{gr-pde}
 u_t-u_{txx}+[f(u)]_x-[f(u)]_{xxx}+\Bigl[g(u)+\frac{f''(u)}{2}u_x^2\Bigr]_x=0.
\end{equation}
Equation~\eqref{gr-pde} is often referred as the {\it generalized hyper-elastic rod wave equation}, see~\cite{HolRay07}.
The Camassa--Holm equation corresponds to the choice $f(u)=u^2/2$,  $g(u)=\kappa u+u^2$ and the rod equation to the choice
$f(u)=\frac{\gamma}{2}u^2$ and $g(u)=\frac{3-\gamma}{2}u^2$.
When $f(u)=\frac{u^{Q+1}}{Q+1}$ and $g(u)=\kappa u+\frac{Q^2+3Q}{2(Q+1)}u^{Q+1}$ one recovers from~\eqref{gr-pde}
another class of equations with interesting mathematical properties, studied in~\cite{HakKir}.

From now on, we will study the Cauchy problem for the generalized rod equation, written in the non-local form, formally equivalent 
to~\eqref{gr-pde}:
 \begin{equation}
 \label{genrod}
  \begin{cases}
   u_t+f'(u)u_x+\partial_xp*\Bigl[g(u)+\textstyle\frac{f''(u)}{2}u_x^2\Bigr]=0, &x\in\R,\quad t>0,\\
   u(x,0)=u_0(x), & x\in\R.
  \end{cases}
\end{equation}
 Here
 \[
  p(x)=\textstyle\frac12e^{-|x|}
 \]
 is the fundamental solution of the operator $1-\partial^2_x$.
 The problem~\eqref{genrod} is thus written in the abstract form
 \begin{equation*}
  \frac{\dd u}{\dd t} +A(u)= H(u), \qquad u(x,0)=u_0(x),
 \end{equation*}
with $A(u)=f'(u)\partial_x$ and $H(u)=-\partial_x(1-\partial_x^2)^{-1}\bigl[g(u)+\frac{f''(u)}{2}u_x^2\bigr]$.
The local existence theory can be developped applying classical Kato's approach~\cite{Kat75}. 
For reader's convenience we collect in a single theorem the main results of the recent paper of Tian, Yan and Zhang \cite{TYZ14}
on the problem~\eqref{genrod}.

 \begin{theorem}[See \cite{TYZ14}]
 \mbox{}
  \label{th:TYZ}
  \begin{enumerate}
 \item
 Assume that $f,g\in C^\infty(\R)$. Let  $u_0\in H^s(\R)$, $s>3/2$. 
 Then there exists $T>0$, with $T=T(u_0,f,g)$ and a unique solution $u$ to the Cauchy problem~\eqref{genrod} such that 
 $u\in C([0,T),H^s(\R))\cap C^{1}([0,T),H^{s-1}(\R))$.  The solution has constant energy integral
 \begin{equation*}
\label{en-inv}
 \int_{\R} (u^2+(u_x)^2)=\int_{\R} (u_0^2+(u_{0}')^2)=\|u_0\|_{H^1}^2.
 \end{equation*}
 Moreover, the solution depends continuously on the initial data: the mapping $u_0\mapsto u$ is continuous from $H^s(\R)$ to $C([0,T),H^s(\R))\cap C^{1}([0,T),H^{s-1}(\R))$.
 \item Assume in addition that $f''\ge\gamma>0$:
 \begin{itemize}
 \item [i)](Blowup scenario and rate)
 Let $0<T^*\le\infty$ be the maximal time of the solution in~$C([0,T^*),H^s(\R))\cap C^1([0,T^*),H^{s-1}(\R))$.
 Then $T^*<\infty$ if and only if 
 \begin{equation*}
 \label{scenario}
 \lim_{t\to T^*} \inf_{x\in\R} u_x(t,x)=-\infty.
 \end{equation*}
 In this case, the blowup rate of $\inf_{x\in\R} u_x(t,x)$ as $t\to T^*$ is $O(\frac{1}{T^*-t})$.
 \item[ii)](Blowup criterion)
 Assume that there exists a point $x_0\in\R$ such that
\begin{equation}
 \label{blow:TYZ}
 u_0'(x_0)<-\sqrt\frac{4 \sup_{|v|\le \|u_0\|_{H^1}} |g(v)|+{\|u_0\|_{H^1}^2}\sup_{|v|\le \|u_0\|_{H^1}}f''(v)}{\gamma}.
\end{equation}
Then~$u$ blows up in finite time and 
$T^*\le \frac{1}{\sqrt{2C_0\gamma}}\log\left(\frac{\sqrt{\gamma/2}\, u_0'(x_0)-\sqrt C_0}{\sqrt{\gamma/2}\, u_0'(x_0)+\sqrt C_0}\right)$,
where $C_0=C_0(\|u_0\|_{H^1},f,g)$ is given by 
 \begin{equation*}
  \label{def:K}
  C_0\equiv 2 \sup_{|v|\le \|u_0\|_{H^1}} |g(v)|+\frac{\|u_0\|_{H^1}^2}{2}\sup_{|v|\le \|u_0\|_{H^1}}f''(v).
 \end{equation*}
\end{itemize} 
\end{enumerate}
 \end{theorem}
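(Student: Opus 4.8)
The plan is to treat the three assertions separately: the well-posedness part via Kato's abstract quasilinear theory, and the two blowup statements by following the transport term of~\eqref{genrod} along its characteristics.

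\emph{Part 1.} I would write \eqref{genrod} as $\frac{\dd u}{\dd t}+A(u)u=H(u)$ with $A(u)=f'(u)\partial_x$ and $H(u)=-\partial_x(1-\partial_x^2)^{-1}\bigl[g(u)+\tfrac{f''(u)}{2}u_x^2\bigr]$, and check the hypotheses of Kato's quasilinear semigroup theorem for the pair $Y=H^s\hookrightarrow X=L^2$: that $A(u)$ is quasi-$m$-accretive on $L^2$ uniformly on bounded sets of $H^s$; that $u\mapsto A(u)\in\mathcal L(Y,X)$ is Lipschitz; that $(1-\partial_x^2)^{s/2}$ conjugates $A(u)$ up to a bounded operator (a Kato--Ponce / Moser commutator estimate, using $f\in C^\infty$ and $H^s\hookrightarrow C^1$ for $s>3/2$); and that $H$ maps $Y$ to $Y$ and is Lipschitz on bounded sets (Moser's inequality for $f''(u)u_x^2$ and the order-$(-1)$ smoothing of $\partial_x(1-\partial_x^2)^{-1}$, after absorbing the constant $g(0)$ into the derivative). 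Kato's theorem then yields existence, uniqueness and continuous dependence in $C([0,T),H^s)\cap C^1([0,T),H^{s-1})$. For the energy identity I would rewrite~\eqref{gr-pde} as $\partial_t(u-u_{xx})+\partial_x\bigl[f(u)-(f(u))_{xx}+g(u)+\tfrac{f''(u)}{2}u_x^2\bigr]=0$, pair with $u$, integrate by parts, and note that every resulting flux is an exact $x$-derivative of a function of $u$, which integrates to zero for $u\in H^s$; hence $\frac{\dd}{\dd t}\int(u^2+u_x^2)=0$.

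\emph{Part 2(i).} Let $q(t,x)$ solve $\dot q=f'(u(t,q))$ and put $w(t)=u_x(t,q(t,x))$. Differentiating \eqref{genrod} in $x$, using $\partial_x^2(p*h)=p*h-h$, and evaluating along $q$ gives the Riccati-type identity
\begin{equation*}
 w'(t)=-\tfrac{f''(u)}{2}\,w(t)^2+R(t),\qquad R(t):=g(u)-p*\Bigl[g(u)+\tfrac{f''(u)}{2}u_x^2\Bigr],
\end{equation*}
where Young's inequality together with $\|u_x\|_{L^2}\le\|u_0\|_{H^1}$ and $\|u\|_{L^\infty}\le\|u_0\|_{H^1}$ (conservation of energy) gives $|R(t)|\le C_0$. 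Since $f''\ge\gamma$ and $w^2\ge0$, it follows that $m(t):=\inf_x u_x(t,\cdot)$ satisfies $m'\le-\tfrac\gamma2 m^2+C_0$ (with the a.e.\ derivative read off at a minimizing point, where $u_{xx}=0$), so once $m$ drops below $-\sqrt{2C_0/\gamma}$ it reaches $-\infty$ in finite time; in particular $T^*=\infty$ forces $m$ to stay bounded below. Conversely, if $m\ge -L$ on $[0,T)$, then applying the $x$-differentiated equation at a spatial \emph{maximum} of $u_x$ gives $\frac{\dd}{\dd t}\sup_x u_x\le C_0$, so $\sup_x u_x$ grows at most linearly; hence $\|u_x(t)\|_\infty$ is bounded on $[0,T]$ and the estimate $\frac{\dd}{\dd t}\|u\|_{H^s}^2\le C(\|u\|_{H^1},\|u_x\|_\infty)\|u\|_{H^s}^2$ with Gronwall extends the solution past $T$. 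Together these give $T^*<\infty$ iff $m(t)\to-\infty$. For the rate, the same inequality $m'\le-\tfrac\gamma2 m^2+C_0$ gives $m(t)\le -c_1/(T^*-t)$, while the matching lower inequality $m'\ge -\tfrac12\bigl(\sup f''(u)\bigr)m^2-C_0\ge -C m^2$ (valid for $m$ large negative) gives $m(t)\ge -c_2/(T^*-t)$; hence the blowup rate is $O(1/(T^*-t))$.

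\emph{Part 2(ii).} Take the characteristic through $x_0$, so $w(0)=u_0'(x_0)$, and recall $w'\le-\tfrac\gamma2 w^2+C_0$. Compare with the separable ODE $y'=-\tfrac\gamma2 y^2+C_0$, $y(0)=u_0'(x_0)$: hypothesis \eqref{blow:TYZ} says precisely $y(0)<-\sqrt{2C_0/\gamma}$, i.e.\ $y(0)$ lies strictly below the negative equilibrium $-\sqrt{2C_0/\gamma}$, so $y$ is strictly decreasing and, by partial fractions, $y\to-\infty$ at the finite time $T=\frac{1}{\sqrt{2C_0\gamma}}\log\frac{\sqrt{\gamma/2}\,u_0'(x_0)-\sqrt{C_0}}{\sqrt{\gamma/2}\,u_0'(x_0)+\sqrt{C_0}}$. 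The ODE comparison principle gives $w\le y$ on their common existence interval, so $m(t)\le w(t)\to-\infty$ no later than $t=T$; by Part 2(i), $T^*\le T$.

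\emph{Main obstacle.} The algebra above is elementary; the real work is making the characteristic method rigorous for $u\in C([0,T^*),H^s)$, $s>3/2$: that $q(t,\cdot)$ is a well-defined $C^1$ diffeomorphism of $\R$, that $t\mapsto u_x(t,q(t,x))$ is absolutely continuous with the derivative above, and --- for the lower blowup rate --- that $m(t)=\inf_x u_x(t,\cdot)$ is locally Lipschitz with a.e.\ derivative equal to $\partial_t u_x$ at a minimizing point. These are obtained by mollifying the equation and invoking the regularity of the data-to-solution map from Part 1, and constitute the technical heart of the argument.
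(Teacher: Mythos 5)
Note first that the paper itself gives no proof of this statement: Theorem~\ref{th:TYZ} is imported from \cite{TYZ14} purely as background, so there is no in-paper argument to compare yours against. Your outline follows what is, as far as I can tell, exactly the route of the cited reference (and the machinery the present paper reuses in its own Section~3): Kato's quasilinear theory for well-posedness together with a direct computation for the energy law, then the Riccati inequality $\frac{d}{dt}\bigl[u_x(t,q(t,x))\bigr]\le-\tfrac{\gamma}{2}u_x^2+C_0$ along the flow of $f'(u)\partial_x$, with the nonlocal remainder bounded by $C_0$ via Young's inequality and the conserved $H^1$ norm, and finally ODE comparison; your partial-fraction computation reproduces the stated upper bound on $T^*$ exactly, and your observation that the convexity $f''\ge\gamma>0$ also caps $\sup_x u_x$ (needed to close the continuation argument) is the right one. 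Two small corrections, neither fatal. In the energy identity not every flux term is an exact derivative of a function of $u$ alone: after pairing with $u$, the contributions $\int u_x\,(f(u))_{xx}$ and $-\int u_x\,\tfrac{f''(u)}{2}u_x^2$ each equal $\pm\tfrac12\int f''(u)u_x^3$ and cancel only as a pair. And you have the two sides of the blowup rate attributed to the wrong inequalities: for $m$ very negative, $m'\le-\tfrac{\gamma}{2}m^2+C_0$ gives $\frac{d}{dt}(1/m)\ge\gamma/4$, which integrates up to $T^*$ to $m(t)\ge-c/(T^*-t)$ (it \emph{caps} $|m|$, which is what the $O(1/(T^*-t))$ claim requires), while the reverse inequality $m'\ge-Cm^2$ yields $m(t)\le-c'/(T^*-t)$. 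The genuinely technical points you defer (a.e.\ differentiability of $\inf_x u_x$ in the sense of Constantin--Escher, rigorous characteristics for $3/2<s<3$ by mollification) are correctly identified as the places where the real work lies.
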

 In the first item, the existence time~$T$ can be taken independent on the parameter~$s$ in the following sense: 
 if $u_0$ also belongs to $H^{s_1}(\R)$  with $s_1>3/2$, then we have also 
 $u\in C([0,T),H^{s_1}(\R))\cap C^{1}([0,T),H^{s_1-1}(\R))$. 
Additional results in~\cite{TYZ14} include lower bound estimates for the existence time~$T^*$ and the lower semi-continuity of the
existence time. Let us also mention the construction of global conservative weak solutions for such equation
\cite{HolRay07} (see also \cites{BreCon07, BreCon07bis, CHK05} for earlier results on weak solutions for more
specific choices of the functions~$f$ and $g$).

\medskip
 \subsection*{Main results}
The purpose of this paper is to establish a new blowup criterion for equation~\eqref{genrod}, considerably simplifying~\eqref{blow:TYZ}, and 
extending our previous result established in~\cite{Bra13} in the special case of the classical rod equation. Our second goal is to 
handle more general boundary conditions in order to encompass the case of  solutions not necessarily vanishing at infinity.
In particular, in the present paper we will be able to cover the case $f(u)=u^2$ and $g(u)=\kappa u +u^2$
corresponding to the Camassa--Holm equation with dispersion ($\kappa>0$),  a case that was not covered in~\cite{Bra13}.
Contrary to previously known blowup criteria, like those in~\cites{CamHolHym94, ACon00, ConEschActa, ConStra00, LiOlver00, TYZ14,Yin04,Zhou2004}, our criterion has the specific feature of being {\it purely local\/} in the space variable: indeed our blowup condition only involves the values of $u_0(x_0)$ and $u'_0(x_0)$ in a single  point $x_0$ of the real line. On the other hand, for earlier criteria, checking the blowup conditions involved the computation of global quantities (typically, the  $\|u_0\|_{H^1}$ norm, as in criterion \eqref{blow:TYZ} above) or other global conditions  like antisymmetry assumptions or sign conditions on the associate potential.

As we shall see, in order to establish such blowup result we will need to restrict the choice of the admissible functions~$f$ and~$g$.
On the other hand, when available, our criterion is applicable to a wider class of initial data.
We are now in the position of establishing our theorem.
Roughly speaking, under appropriate conditions on~$f$ and~$g$, we get the finite time 
blowup as soon as 
\begin{equation}
 \label{blo:cond}
 \exists \, x_0\in\R\quad\text{such that}\quad
 u_0'(x_0)<-\beta\bigl|u_0(x_0)-c\bigr|,
 \end{equation}
where $\beta$ and $c$ are two real constants depending on the shape of the functions~$f$ and $g$.

We obtain two slightly different versions  when $g$ is bounded from below,
or when~$g$ is bounded from above. Both cases turn out to be physically interesting.

\begin{theorem}
\label{th:blowup}
Let $u_0\in H^s(\R)$, with $s>3/2$. 
Let $ f,g\in C^\infty(\R)$ with $f''\ge\gamma>0$.
The maximal time~$T^*$ of the solution~$u$ to 
problem~\eqref{genrod} 
in $C([0,T^*),H^s(\R))\cap C^1([0,T^*),H^{s-1}(\R))$ must be finite, if at least one of the two
following conditions~\eqref{C1} or~\eqref{C2} is fulfilled:
\begin{enumerate}
 \item
 \label{C1}
\begin{itemize}
\item[-] $\exists\,c\in\R$ such that
$m=g(c)=\min_{\R} g$.
\item[-]
The map $\phi\colon\R\to\R$ given by
$\phi=\sqrt{\textstyle\frac{1}{\gamma}(g-m)}$ is $K$-Lipschitz with $0\le K\le 1$,
\item[-]
$\exists\, x_0\in\R\quad \text{such that}\qquad
u_0'(x_0)<-\frac{1}{2}\Bigl(\sqrt{1+8K^2}\,-1\Bigr) |u_0(x_0)-c|.$
\end{itemize}
\item
\label{C2}Or, otherwise,
\begin{itemize}
\item[-] $\exists\,c\in\R$ such that
$M=g(c)=\max_{\R} g$.
\item[-]
The map $\psi\colon\R\to\R$ given by
$\psi=\sqrt{\textstyle\frac{1}{\gamma}(M-g)}$ is $K$-Lipschitz with~\mbox{$0\le K\le \frac{1}{\sqrt8}$}
\item[-]
$\exists\, x_0\in\R\quad \text{such that}\qquad
u_0'(x_0)<-\frac{1}{2}\Bigl(1-\sqrt{1-8K^2}\Bigr) |u_0(x_0)-c|.$
\end{itemize}
\end{enumerate}
More precisely, the following upper bound estimate for~$T^*$ holds:
\begin{equation}
\label{estT}
 T^*\le \frac{4}{\gamma\sqrt{4u_0'(x_0)^2-\Bigl(\sqrt{1\pm8K^2}-1\Bigr)^2\bigl(u_0(x_0)-c\bigr)^2}\,\,},
\end{equation}
where in the term $\pm8K^2$ one has to take the positive sign under the conditions of Part~\eqref{C1}
and the negative sign under the conditions of~Part~\eqref{C2}.
\end{theorem}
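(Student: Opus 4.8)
The plan is to follow a single characteristic curve and reduce the PDE to a scalar differential inequality of Riccati type. Let $q=q(t,x)$ solve $q_t=f'(u(t,q))$, $q(0,x)=x$; since $f''\ge\gamma>0$ one has $q_x(t,x)=\exp\bigl(\int_0^t f''(u)u_x\,ds\bigr)>0$, so for each $t<T^*$ the map $q(t,\cdot)$ is an increasing diffeomorphism of $\R$. Using the non-local form \eqref{genrod} together with the identity $\partial_x^2(p*h)=p*h-h$ (valid because $p$ is the Green function of $1-\partial_x^2$), a direct computation along the characteristic issued from $x_0$ gives, for $v(t):=u_x(t,q(t,x_0))$ and $w(t):=u(t,q(t,x_0))-c$,
\[
\dot v=-\tfrac{f''(u)}{2}\,v^2+g(u)-p*\Bigl[g(u)+\tfrac{f''(u)}{2}u_x^2\Bigr],\qquad
\dot w=-\partial_x p*\Bigl[g(u)+\tfrac{f''(u)}{2}u_x^2\Bigr],
\]
all nonlocal terms evaluated at $q(t,x_0)$. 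Since $p*1=1$, under the hypotheses of Part~\eqref{C1} we write $g(u)=m+(g-m)(u)$, so that the convolution acts on the nonnegative function $h:=(g-m)(u)+\tfrac{f''(u)}{2}u_x^2\ge\tfrac{\gamma}{2}u_x^2\ge0$; then $\dot v=-\tfrac{f''(u)}2 v^2+(g-m)(u)-p*h$, with $p*h\ge0$ and $|\partial_x p*h|\le p*h$. Under the hypotheses of Part~\eqref{C2} one argues symmetrically, replacing $g-m$ by $M-g=\gamma\psi^2\ge0$: the forcing term $-(M-g)(u)$ acquires the favorable sign, at the price of a new nonlocal term $+p*(M-g)(u)\ge0$ with the wrong sign, which is precisely why the admissible range of $K$ must shrink.

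Next I would exploit the two structural hypotheses. From $f''\ge\gamma$ we get $-\tfrac{f''(u)}2 v^2\le-\tfrac\gamma2 v^2$; from the $K$-Lipschitz assumption on $\phi=\sqrt{(g-m)/\gamma}$ together with $\phi(c)=0$ we get the pointwise bound $(g-m)(u(t,q))=\gamma\phi(u(t,q))^2\le\gamma K^2 w(t)^2$ (and analogously in Part~\eqref{C2}). The role of the constant $\beta:=\tfrac12\bigl(\sqrt{1\pm8K^2}-1\bigr)$ is that it is the nonnegative root of
\[
\beta^2+\beta=2K^2 \quad(\text{Part }\eqref{C1}),\qquad \beta-\beta^2=2K^2\quad(\text{Part }\eqref{C2}),
\]
so that $\sqrt{1\pm8K^2}-1=2\beta$ and $2K^2=\beta(1\pm\beta)$; this is the identity that forces the quadratic forcing and the dispersive nonlocal contribution to balance. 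Guided by \eqref{estT}, the natural object is
\[
\Phi(t):=v(t)^2-\beta^2 w(t)^2=\bigl(v+\beta w\bigr)\bigl(v-\beta w\bigr),
\]
the blowup hypothesis \eqref{blo:cond} being exactly $v(0)<-\beta|w(0)|$, i.e.\ $v(0)<0$ and $\Phi(0)>0$. One shows that on the region $\mathcal R=\{v<0,\ \Phi>0\}$ — where $v$ can never vanish, since $v=0$ would force $\Phi\le0$ — one has the differential inequality
\[
\dot\Phi\ \ge\ \gamma\,\Phi^{3/2}.
\]
Granting this, $\mathcal R$ is forward invariant, $\tfrac{d}{dt}\Phi^{-1/2}\le-\tfrac\gamma2$, hence $\Phi(t)^{-1/2}\le\Phi(0)^{-1/2}-\tfrac\gamma2 t$, so $\Phi$ must blow up no later than $\tfrac{2}{\gamma\sqrt{\Phi(0)}}$. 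Since $\sqrt{\Phi(0)}=\sqrt{u_0'(x_0)^2-\beta^2(u_0(x_0)-c)^2}$ and $\sqrt{1\pm8K^2}-1=2\beta$, this is precisely the bound \eqref{estT}; and as $\Phi(t)\to+\infty$ with $v(t)<0$ we get $v(t)=u_x(t,q(t,x_0))\to-\infty$ in finite time, so $\inf_x u_x(t,x)\to-\infty$ and Theorem~\ref{th:TYZ}(2)(i) forces $T^*<\infty$.

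The main obstacle is the proof of $\dot\Phi\ge\gamma\Phi^{3/2}$ in $\mathcal R$, that is, the control of the nonlocal terms $p*h$ and $\partial_x p*h$. Expanding $\dot\Phi=2v\dot v-2\beta^2 w\dot w$ and using $f''\ge\gamma$, $(g-m)(u)\le\gamma K^2 w^2$, the sign of $-p*h$, and $|\partial_x p*h|\le p*h$, one reaches a bound of the form
\[
\dot\Phi\ \ge\ \gamma|v|\bigl(v^2-2K^2 w^2\bigr)+2\,(p*h)\,\bigl(|v|-\beta^2|w|\bigr),
\]
with $|v|-\beta^2|w|>0$ on $\mathcal R$ (because $|v|>\beta|w|\ge\beta^2|w|$ there). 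In Part~\eqref{C1} the first term equals $\gamma|v|(\Phi-\beta w^2)$, which by itself is \emph{not} $\ge\gamma\Phi^{3/2}$ when $|v|$ only slightly exceeds $\beta|w|$; hence one cannot simply discard $p*h$, and the delicate point — the place where Parts~\eqref{C1} and \eqref{C2} genuinely diverge, and where the thresholds $K\le1$ resp.\ $K\le1/\sqrt8$ enter — is to extract from $p*h$ exactly the missing amount of positivity. This is done with the finer splitting $p*h=\tfrac12(h_++h_-)$, $\partial_x p*h=\tfrac12(h_--h_+)$, where $h_+=\int_q^\infty e^{-(y-q)}h\,dy$ and $h_-=\int_{-\infty}^q e^{-(q-y)}h\,dy$ are both nonnegative, combined with the pointwise estimate $h\ge\tfrac\gamma2 u_x^2$ and an integration by parts that turns $\int e^{-|q-y|}u_x\,dy$ into $w(t)$ plus a controlled remainder. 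Optimizing this estimate is what pins down the relation $\beta(1\pm\beta)=2K^2$, hence the sharp form of the criterion.
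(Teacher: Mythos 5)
Your architecture is the right one and is in fact equivalent to the paper's: with $A=\beta(u-c)-u_x$ and $B=\beta(u-c)+u_x$ along the flow, your $\Phi$ is exactly $-AB$, your target inequality $\dot\Phi\ge\gamma\Phi^{3/2}$ is exactly the paper's $h'\ge\frac{\gamma}{2}h^2$ for $h=\sqrt{-AB}$, the algebra $2K^2=\beta(1\pm\beta)$, the forward invariance of $\{u_x<0,\ \Phi>0\}$, and the final ODE comparison giving \eqref{estT} are all correct. The gap is that the one step you yourself flag as ``the main obstacle'' --- the control of the nonlocal terms --- is not proved, and the route you sketch for it would not close. Your intermediate bound $\dot\Phi\ge\gamma|u_x|(\Phi-\beta w^2)+2(p*h)(|u_x|-\beta^2|w|)$ is correct, but it is obtained by lumping the convolution via $|\partial_xp*h|\le p*h$ and keeping only the sign of $p*h$; to finish from there you would need a lower bound of the type $p*h\gtrsim \beta w^2$, which is false in general (take $g$ attaining its minimum at a second point $c'\ne c$ and $u\equiv c'$ near $q(t,x_0)$: then $h$ vanishes identically there while $w^2=(c'-c)^2>0$). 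The directional information discarded by $|\partial_xp*h|\le p*h$ is precisely what the proof cannot afford to lose: one must estimate $(p\mp\beta\partial_xp)*h=(1\pm\beta)(p\mathbf{1}_{\R^+})*h+(1\mp\beta)(p\mathbf{1}_{\R^-})*h$ with each one-sided piece bounded \emph{below by $\frac{\alpha}{2}(g(u)-m)+\frac m2$ at the point itself} (this is where $|\beta|\le1$, hence $K\le1$, enters), so that the local forcing $+g(u)$ in the equation for $u_{tx}$ is partially cancelled and only the deficit $(1-\alpha)(g(u)-m)\le\frac{\gamma}{2}\beta^2(u-c)^2$ survives, to be absorbed into $\Phi$.

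That one-sided estimate (Lemma~\ref{lem:ineq} of the paper) is the heart of the proof, and your sketch of it is off-track: keeping only $h\ge\frac{\gamma}{2}u_x^2$ and integrating by parts $\int e^{-|q-y|}u_x\,dy$ produces $u(q)$ \emph{linearly}, which cannot yield the required quadratic-in-$(g(u)-m)$ lower bound for a general $g$ (it happens to work for $g(u)=u^2$ because there $g'(u)u_x=(u^2)_x$). The correct mechanism is a completed square: the Lipschitz hypothesis on $\phi$ gives $(g')^2\le4\gamma K^2(g-m)$, hence the polynomial $P(\lambda)=\frac{\gamma}{2}\lambda^2-\alpha g'(u)\lambda+2\alpha^2K^2(g(u)-m)$ is nonnegative, i.e.
\begin{equation*}
\frac{f''(u)}{2}u_x^2+2\alpha^2K^2\bigl(g(u)-m\bigr)\ \ge\ \alpha\,g'(u)u_x\ =\ \alpha\,\partial_\xi\bigl[g(u)-m\bigr];
\end{equation*}
convolving with $p\mathbf{1}_{\R^\pm}$, integrating the exact derivative by parts, and moving the resulting term $\alpha(p\mathbf{1}_{\R^\pm})*(g(u)-m)$ back to the left forces $2\alpha^2K^2+\alpha=1$, which is what actually pins down $\alpha$ in \eqref{choice-a} and hence $\beta=\frac12(\sqrt{1\pm8K^2}-1)$ --- not an optimization over your splitting $h_\pm$. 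Without this lemma (or an equivalent substitute) the proof is incomplete at its only genuinely delicate point; everything surrounding it in your proposal is sound.
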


We will also establish a variant of this theorem for solutions that are not in $H^s(\R)$. 
See Theorem~\ref{th:nonvan} below. This variant will apply to a large class of infinite energy solutions, as well as  to 
solutions with non-vanishing and possibly different asymptotics as $x\to+\infty$ and $x\to-\infty$.
Another possible variant is obtained assuming that $f$ is strictly concave, rather than strictly convex: in this case the blowup
condition on $u_0$ should be of the form: $u_0'(x_0)>\beta|u_0(x_0)-c|$.

\begin{remark}[Application to the Camassa--Holm equation]
The case $f(u)=\frac12 u^2$ and $g(u)=\kappa u +u^2$ corresponds to  the Camassa--Holm equation with dispersion~\eqref{CH}.
Situation~\eqref{C1} of Theorem~\ref{th:blowup} applies (with $c=-\kappa/2$, $\phi(u)=\sqrt{u^2+\kappa u+{\kappa^2}/{4}}$
and $K=1$).
We then immediately get the following corollary:

\begin{corollary}
\label{cor:CH}
Let $u_0\in H^s(\R)$, with $s>3/2$ be such that at some point~$x_0\in\R$ we have
\[
 u_0'(x_0)<-\bigl|u_0(x_0)+\textstyle\frac{\kappa}{2}\bigr|.
\]
Then the corresponding solution of the Camassa--Holm equation breaks down in finite time.
\end{corollary}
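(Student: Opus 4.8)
The plan is to realize Corollary~\ref{cor:CH} as a direct specialization of Part~\eqref{C1} of Theorem~\ref{th:blowup}, so that essentially all the work reduces to verifying the three hypotheses of that part for the Camassa--Holm choices $f(u)=\frac12u^2$ and $g(u)=\kappa u+u^2$, and then checking that the abstract blowup threshold collapses to the clean condition stated in the corollary. First I would record that $f''(u)\equiv 1$, so that $f''\ge\gamma>0$ holds with the optimal value $\gamma=1$; this is precisely what will make the final constant come out to exactly~$1$. Both $f$ and $g$ are polynomials, hence lie in $C^\infty(\R)$, so the regularity hypotheses of the theorem are met, and $u_0\in H^s(\R)$ with $s>3/2$ is assumed.

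Next I would locate the constant~$c$ by minimizing~$g$. Completing the square gives $g(u)=\bigl(u+\frac{\kappa}{2}\bigr)^2-\frac{\kappa^2}{4}$, so $g$ attains its minimum $m=-\frac{\kappa^2}{4}$ at the single point $c=-\frac{\kappa}{2}$; in particular the first bullet of Part~\eqref{C1} holds. With $\gamma=1$ and this value of~$m$, the map $\phi$ of the theorem becomes
\[
\phi(u)=\sqrt{\tfrac1\gamma\bigl(g(u)-m\bigr)}=\sqrt{u^2+\kappa u+\tfrac{\kappa^2}{4}}=\Bigl|u+\tfrac{\kappa}{2}\Bigr|.
\]
Since $u\mapsto|u+\frac\kappa2|$ is $1$-Lipschitz and no smaller Lipschitz constant works, the second bullet holds precisely with $K=1$, which lies in the admissible range $0\le K\le 1$, the endpoint being allowed.

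Finally I would substitute $K=1$ and $c=-\frac\kappa2$ into the blowup threshold of Part~\eqref{C1}. The prefactor is $\frac12\bigl(\sqrt{1+8K^2}-1\bigr)=\frac12(\sqrt9-1)=1$, so the abstract condition $u_0'(x_0)<-\frac12\bigl(\sqrt{1+8K^2}-1\bigr)\,|u_0(x_0)-c|$ reduces to exactly $u_0'(x_0)<-|u_0(x_0)+\frac\kappa2|$, which is the hypothesis of the corollary. Theorem~\ref{th:blowup} then yields finite-time breakdown of the corresponding solution, and the upper bound~\eqref{estT} specializes (with $\pm=+$) to an explicit estimate on~$T^*$.

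I do not expect a genuine obstacle here, since the corollary is a pure instantiation; the only point demanding care is the boundary value $K=1$. One must confirm that Part~\eqref{C1} admits the closed endpoint $K=1$ rather than requiring $K<1$, and that the square root $\sqrt{(u+\kappa/2)^2}$ is correctly taken as $|u+\kappa/2|$ rather than $u+\kappa/2$, since it is this absolute value that produces the $1$-Lipschitz bound with equality and hence the sharp constant~$1$ appearing in the statement.
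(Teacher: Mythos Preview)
Your proposal is correct and follows exactly the paper's own approach: the corollary is derived by specializing Part~\eqref{C1} of Theorem~\ref{th:blowup} to $f(u)=\tfrac12 u^2$, $g(u)=\kappa u+u^2$, with $\gamma=1$, $c=-\kappa/2$, $\phi(u)=|u+\kappa/2|$ and $K=1$, so that the threshold $\tfrac12(\sqrt{1+8K^2}-1)$ collapses to~$1$. Your attention to the endpoint $K=1$ and to taking the nonnegative square root as an absolute value is well placed, and both points are consistent with the theorem as stated.
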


This corollary could be also obtained from the special case $\kappa=0$ (established in~\cite{Bra13}) 
via the change of unknown $v(t,x)=u(t,x-\frac{\kappa}{2}t)+\frac\kappa2$. Indeed, if~$u$ solves equation~\eqref{CH},
then~$v$ solves the Camassa--Holm equation without dispersion. However, one should check
that the proof of~\cite{Bra13} does indeed go through when applied to~$v$, which requires slight changes 
(the point is that the solution $v$ does not vanish as $x\to\infty$ as it was required in~\cite{Bra13}).
\end{remark}

\begin{remark}[Application to the classical rod equation]
In the case $f(u)=\frac{\gamma}{2}u^2$ and $g(u)=\frac{3-\gamma}{2}u^2$, corresponding to the classical rod equation,
the conditions of our theorem are satisfied if and only if~$1\le\gamma\le4$. Namely, situation~\eqref{C1} applies for~$1\le\gamma\le3$ and situation~\eqref{C2} applies for~$3\le\gamma\le4$.
Making explicit our blowup condition in this case, we see that solutions of the classical rod equation break down in finite time
as soon as, at some point $x_0\in\R$, we have 
\begin{equation}
\label{res:cmp}
u_0'(x_0)<- \frac{1}{2\sqrt\gamma}\biggl|\sqrt{12-3\gamma}-\sqrt\gamma\biggr|\,|u_0(x_0)| \qquad(1\le\gamma\le4).
\end{equation}
This conclusion allow us to recover the result in~\cite{Bra13}. 
Outside the range $1\le\gamma\le4$ 
it seems difficult to get purely ``local-in-space'' blowup criteria in the same spirit as in 
Theorem~\ref{th:blowup}. But non local-in-space blowup conditions involving the computation of the $\|u_0\|_{H^1}$ as in~\eqref{blow:TYZ} still apply outside the above range for the parameter~$\gamma$, see, {\it. e.g.\/}, 
\cite{ConStra00}, \cite{Guo-Zhou-SIAM}.
\end{remark}

Next section will be devoted to the proof of Theorem~\ref{th:blowup}.
The most immediate application of this theorem  is that one can recover earlier blowup criteria, like that in~\eqref{blow:TYZ}, by a simple application of Sobolev imbedding theorems.
In the last section we discuss some further consequences of of Theorem~\ref{th:blowup} by establishing three corollaries.
Such theorem and its variant, Theorem~\ref{th:nonvan}, imply that global solutions must satisfy quite stringent properties,
including a not too fast decay of $u(t,x)-c$  as $|x|\to\infty$, and  sign restrictions for $u(t,x)-c$.
Namely, we prove that the finite time blowup must occur if $u_0(x)=c+o(e^{-\beta|x|})$ as $|x|\to \infty$.
The finite time blowup must occur also if 
there exist $x_1<x_2$ such that $u(t,x_1)>c>u(t,x_2)$. Here, the constants 
constants $\beta>0$ and $c\in\R$ are as in Theorem~\ref{th:blowup}.

\section{The proof of Theorem~\ref{th:blowup}}

\begin{proof}[Proof of Theorem~\ref{th:blowup}]
 We can assume, without restriction, that $s\ge3$. Indeed, if $3/2<s<3$, and $u_0\in H^s(\R)$ satisfies a condition of the 
 form~\eqref{blo:cond}, then  we can  approximate $u_0$ with a sequence of data belonging to $H^3(\R)$ and satisfying 
 the same condition~\eqref{blo:cond}, and next use the well-posedness result recalled in Theorem~\ref{th:TYZ}.
 In what follows we then consider a solution $u\in C([0,T^*)\cap H^3)\cap C^1([0,T^*),H^{2})$. 

We start with an useful lemma:
\begin{lemma}
\label{lem:ineq}
Let ${\bf 1}_{\R^\pm}$ denote one of the two indicator functions~${\bf 1}_{\R^+}$ or ${\bf 1}_{\R^-}$.
\begin{enumerate}
 \item 
 If $f$ and $g$ satisfy the condition as in \mbox{Theorem~\ref{th:blowup}\,-\,\eqref{C1}}
 then the following estimate holds:
\begin{equation}
 \label{ineq}
 \begin{split}
  & (p\mathbf{1}_{\R^\pm})*\biggl(g(u)+\frac{f''(u)}{2}u_x^2\biggr)\ge\frac{\alpha}{2}\Bigl(g(u)-m\Bigr)+\frac m2
 \end{split}
 \end{equation}
with
\begin{equation}
\label{choice-a}
\alpha=\frac{1}{4K^2}\Bigl(\sqrt{1+8K^2}-1\Bigr).
\end{equation}
\item
If $f$ and $g$ satisfy the condition as in \mbox{Theorem~\ref{th:blowup}\,-\,\eqref{C2}},
then we have:
\begin{equation}
 \label{ineq2}
 \begin{split}
  & (p\mathbf{1}_{\R^\pm})*\biggl(g(u)+\frac{f''(u)}{2}u_x^2\biggr)\ge\frac{\alpha}{2}\Bigl(g(u)-M\Bigr)+\frac M2
 \end{split}
 \end{equation}
with
\begin{equation}
\label{choice-a2}
\alpha=\frac{1}{4K^2}\Bigl(1-\sqrt{1-8K^2}\Bigr).
\end{equation}
\end{enumerate}
In the case $g=m=M$ be a constant function 
(this corresponds to $K=0$), the right-hand side of the above convolution estimates reads $(p\mathbf{1}_{\R^\pm})*\bigl(g+\frac{f''(u)}{2}u_x^2\bigr)\ge g/2$.
\end{lemma}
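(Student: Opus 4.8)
The plan is to establish the pointwise convolution estimate \eqref{ineq} by exploiting the explicit kernel $p(x)=\frac12 e^{-|x|}$ and the structure of the nonlinearity. First I would write, for the case of ${\bf 1}_{\R^+}$ (the case ${\bf 1}_{\R^-}$ being symmetric),
\[
 (p{\bf 1}_{\R^+})*\Bigl(g(u)+\tfrac{f''(u)}{2}u_x^2\Bigr)(x)
 =\frac12\int_0^\infty e^{-|x-y|}\Bigl(g(u(y))-m+\tfrac{f''(u(y))}{2}u_y^2\Bigr)\dd y+\frac m2\int_0^\infty \tfrac12 e^{-|x-y|}\dd y\cdot 2,
\]
so that after subtracting off the constant $m$ it suffices to bound from below the convolution of $p{\bf 1}_{\R^+}$ against the \emph{nonnegative} quantity $G(u):=g(u)-m+\frac{f''(u)}{2}u_x^2\ge0$ (using $g\ge m$ and $f''\ge\gamma>0$). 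The target lower bound is $\frac\alpha2(g(u(x))-m)=\frac{\alpha\gamma}{2}\phi(u(x))^2$, with $\phi=\sqrt{\frac1\gamma(g-m)}$ the given $K$-Lipschitz function.

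The key step is a one-dimensional ODE/calculus trick that is standard in this circle of ideas (it goes back to the local-in-space arguments of Brandolese). Split $\R^+$ (or the relevant half-line through $x$) into $\{y<x\}$ and $\{y>x\}$ and integrate by parts, using that $\partial_x(p{\bf 1}_{\R^\pm})$ differs from $p{\bf 1}_{\R^\pm}$ only by a boundary term at $0$ and a Dirac-free expression away from it; the point is that for $h(y):=\phi(u(y))$ one has $|h'(y)|=|\phi'(u(y))||u_y|\le K|u_y|$, hence $h'(y)^2\le K^2 u_y^2\le \frac{2K^2}{\gamma}\cdot\frac{f''(u)}{2}u_y^2$. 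Therefore
\[
 G(u(y))=\gamma\,h(y)^2+\tfrac{f''(u(y))}{2}u_y^2\ \ge\ \gamma\,h(y)^2+\frac{\gamma}{2K^2}\,h'(y)^2 .
\]
Now I would compute $\frac12\int e^{-|x-y|}\bigl(\gamma h^2+\frac{\gamma}{2K^2}(h')^2\bigr)\dd y$ over the half-line: integrating the $(h')^2$ term by parts against $e^{-|x-y|}$ produces $-h(y)h'(y)e^{-|x-y|}$ boundary/sign terms plus $\pm\int e^{-|x-y|}h h'$ and $\int e^{-|x-y|}(h')^2$-free remainders, and the cross terms $\int e^{-|x-y|} h h'$ can be absorbed by completing the square. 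Optimizing over the free weight (which is exactly why the constant $\alpha$ in \eqref{choice-a} is the root of $2K^2\alpha^2+\alpha-1=0$, i.e. $\alpha=\frac{1}{4K^2}(\sqrt{1+8K^2}-1)$) yields precisely $(p{\bf 1}_{\R^\pm})*G(u)\ge \frac{\alpha\gamma}{2}h(x)^2=\frac\alpha2(g(u(x))-m)$, which is \eqref{ineq}.

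For Part (2) I would argue identically with $\psi=\sqrt{\frac1\gamma(M-g)}$ in place of $\phi$: now $g(u)-M=-\gamma\psi(u)^2\le0$, so $\frac{f''(u)}{2}u_x^2\ge \gamma\psi(u)^2\cdot(\text{something})$ is no longer automatic and instead one writes $\frac{f''(u)}{2}u_y^2\ge \frac{\gamma}{2K^2}\psi'(u(y))^2\cdot$, and the sign of the $h^2$-term flips, which is why one now needs the root of $2K^2\alpha^2-\alpha+1=0$, forcing the discriminant condition $K\le\frac1{\sqrt8}$ and giving $\alpha=\frac1{4K^2}(1-\sqrt{1-8K^2})$ as in \eqref{choice-a2}; the same integration-by-parts and completion-of-square then give \eqref{ineq2}. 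The degenerate case $K=0$ (so $g\equiv m=M$) is immediate since then $(p{\bf 1}_{\R^\pm})*(g+\frac{f''(u)}2 u_x^2)\ge g\cdot(p{\bf 1}_{\R^\pm})*1+0=\frac g2$. The main obstacle I anticipate is bookkeeping the boundary terms at $y=0$ and at $y=x$ coming from the kink of $e^{-|x-y|}$ and from the cutoff ${\bf 1}_{\R^\pm}$, and checking that they all carry the favorable sign (or vanish) so that the clean inequality survives; this is a finite, if slightly tedious, computation, and the choice of $\alpha$ is dictated by making the quadratic form in $(h,h')$ pointwise nonnegative after the square is completed.
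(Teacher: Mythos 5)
Your underlying mechanism is the right one and is essentially the paper's: a pointwise quadratic-form inequality in $(h,h')$ with $h=\phi\circ u$ (equivalently, in $u_x$, since $2\gamma hh'=g'(u)u_x$ and $|\phi'|\le K$ is the same as $(g')^2\le 4\gamma K^2(g-m)$), whose solvability condition produces exactly $2K^2\alpha^2+\alpha-1=0$ in case (1) and $2K^2\alpha^2-\alpha+1=0$ in case (2), followed by an integration by parts of the resulting exact derivative against the one-sided exponential kernel. However, two concrete points in your execution are wrong as written and must be repaired.

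First, your opening identity misreads the notation: $(p\mathbf{1}_{\R^+})*F$ truncates the \emph{kernel}, not the integrand, so
\[
(p\mathbf{1}_{\R^+})*F(x)=\frac{e^{-x}}{2}\int_{-\infty}^{x}e^{\xi}F(\xi)\dd\xi,
\]
an integral over the half-line $\{\xi<x\}$, not over $\{y>0\}$ as in your first display. With your reading the constant contribution is $m\int_0^\infty p(x-y)\dd y$, which depends on $x$ and is not $m/2$, so \eqref{ineq} would not come out in the stated form; moreover the kernel $e^{\xi-x}$ has no kink on $\{\xi<x\}$, so the boundary terms at $y=0$ you worry about at the end are artifacts of the misreading (the only boundary contributions are at $\xi=x$ and at $-\infty$, where they vanish). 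The kernel truncation is also what the later proof of Theorem~\ref{th:blowup} needs, via $\partial_xp=p\mathbf{1}_{\R^-}-p\mathbf{1}_{\R^+}$. Second, integrating the $(h')^2$ term by parts, as you literally describe, produces $h''$ and goes nowhere; the workable order of operations (which your final sentence in fact points to) is to complete the square \emph{pointwise first}, obtaining
\[
b\gamma h^2+\frac{\gamma}{2K^2}(h')^2\ \ge\ 2\alpha\gamma hh'=\alpha\,\frac{\rm d}{{\rm d}\xi}\bigl(\gamma h^2\bigr)\qquad\text{with }b=2\alpha^2K^2,
\]
and only then integrate this exact derivative against $e^{\xi}$, which yields $\frac{\alpha}{2}(g(u)-m)(x)-\alpha\,(p\mathbf{1}_{\R^+})*(g(u)-m)(x)$; reabsorbing the last term into the left-hand side is what forces $b+\alpha=1$, i.e.\ $2K^2\alpha^2+\alpha-1=0$ and \eqref{choice-a}. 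Once these two points are fixed, your argument coincides with the paper's, which phrases the pointwise step as nonnegativity of the polynomial $P(\lambda)=\frac{\gamma}{2}\lambda^2-\alpha g'(u)\lambda+b\bigl(g(u)-m\bigr)$ evaluated at $\lambda=u_x$, and handles case (2) and the indicator $\mathbf{1}_{\R^-}$ by the sign changes you correctly anticipate.
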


\begin{proof}
Let us consider the case of the indicator function~${\bf 1}_{\R^+}$.

Under the conditions of \mbox{Theorem~\ref{th:blowup}\,-\,\eqref{C1}} for~$f$ and $g$,
we see that  $g-m\ge0$ and that the zeros of $g-m$ are of order at least two. Then $\phi=\sqrt{\frac{1}{\gamma}(g-m)}$
is differentiable, and by the Lipschitz condition $|\phi'|\le K$.
This in turn implies
\[
(g')^2\le 4\gamma K^2(g-m).
\]
Consider the quadratic polynomial in~$\lambda$,
\[
P(\lambda)=\frac{\gamma}{2}\lambda^2-\alpha g'(u)\lambda+b\bigl(g(u)-m\bigr),
\]
where $\alpha$ and $b$ are constants to be determined below.
We have  $P\ge0$  on the real line if and only if $\alpha^2(g'(u))^2\le 2\gamma b \bigl(g(u)-m\bigr)$.
We then choose 
\[
b=2\alpha^2K^2,
\]
thus ensuring that indeed $P\ge0$.
Recalling  $f''(u)\ge \gamma$ and using the fact that $P(u_x(\xi))\ge0$ for all real $\xi$, we now deduce that
\begin{equation*}
 \begin{split}
  (p\mathbf{1}_{\R^+})*\biggl(b\bigl(g(u)-m\bigr)+\frac{f''(u)}{2}u_x^2\biggr)(x)
  &=\frac{e^{-x}}{2}\int_{-\infty}^x e^\xi\biggl(b\bigl(g(u)-m\bigr)+\frac{f''(u)}{2}u_x^2\biggr)(\xi)\,d\xi\\
  &\ge \frac{e^{-x}}{2}\int_{-\infty}^x e^\xi\bigl( \alpha g'(u)u_x\bigr)(\xi)\,d\xi\\
  &= \alpha \frac{e^{-x}}{2}\int_{-\infty}^x e^\xi\frac{d}{d\xi}\Bigl[ g(u)-m\Bigr](\xi)\,d\xi\\
  &=\frac{\alpha}{2}\bigl(g(u)-m\bigr)(x)-\alpha (p\mathbf{1}_{\R^+})*\bigl(g(u)-m\bigr)(x).
  \end{split}
\end{equation*}

Hence,
\begin{equation}
(p\mathbf{1}_{\R^+})*\biggl((b+\alpha)\bigl(g(u)-m\bigr)+\frac{f''(u)}{2}u_x^2\biggr)(x)\ge \frac\alpha2 \bigl(g(u)-m\bigr)(x).
\end{equation}
In order to get the first  estimate of the lemma we need
\[
b+\alpha=1.
\]
This boils down to the equation $2K^2\alpha^2+\alpha-1=0$.
Taking the largest real root we obtain expression~\eqref{choice-a} for~$\alpha$.
Now observing that $(p{\bf 1}_{\R^+})*m=m\int_\R p{\bf 1}_{\R^+}=m/2$, 
leads to estimate~\eqref{ineq}.

The reasoning when~$f$ and~$g$ satisfy the conditions of \mbox{Theorem~\ref{th:blowup}\,-\,\eqref{C2}} is entirely analogous:
one has  $(g')^2\le 4\gamma K^2(M-g)$.
We can thus consider the same polynomial as before, but choosing now $b=-2\alpha^2 K^2$. Reproducing the same computation as
above (with $M$ instead of~$m$) we find as before the relation $\alpha+b=1$. 
Eliminating~$b$ gives the equation $2K^2\alpha^2-\alpha+1=0$. We thus need $0\le K\le 1/\sqrt 8$ for this equation to admits real roots.
Choosing now the smallest root gives~\eqref{choice-a2} and estimate~\eqref{ineq2}.

The convolution estimate involving~$p{\bf 1}_{\R^-}$ can be proved in a similar way.
Indeed, under, {\it e.g.\/} the assumptions  of \mbox{Theorem~\ref{th:blowup}\,-\,\eqref{C1}}
we have, for any $\lambda\in\R$,
\[
\widetilde P(\lambda)=
\frac{\gamma}{2}\lambda^2+\alpha g'(u)\lambda+b(g(u)-m)\ge0.
\]
Thus,
\begin{equation*}
 \begin{split}
  (p\mathbf{1}_{\R^-})*\biggl(b\bigl(g(u)-m\bigr)+\frac{f''(u)}{2}u_x^2\biggr)(x)
  &=\frac{e^{x}}{2}\int_{x}^\infty e^{-\xi}\biggl(b\bigl(g(u)-m\bigr)+\frac{f''(u)}{2}u_x^2\biggr)(\xi)\,d\xi\\
  &\ge \frac{e^{x}}{2}\int_{x}^{+\infty} e^{-\xi}\bigl( -\alpha g'(u)u_x\bigr)(\xi)\,d\xi\\
  &=\frac{\alpha}{2}\bigl(g(u)-m\bigr)(x)-\alpha (p\mathbf{1}_{\R^-})*\bigl(g(u)-m\bigr)(x),
  \end{split}
\end{equation*}
and we can proceed as in the previous case.
\end{proof}

Let us go back to the proof of Theorem~\ref{th:blowup}.
Taking the space derivative in equation~\eqref{genrod}, and recalling that 
$(1-\partial_x^2)p$ equals the Dirac mass at the origin, we get
\begin{equation}
\label{genrodx}
u_{tx}+f'(u)u_{xx}=-\frac{f''(u)}{2}u_x^2+g(u)-p*\Bigl[g(u)+\textstyle\frac{f''(u)}{2}u_x^2\Bigr].
\end{equation}

We now consider the flow map, defined by
\begin{equation}
\label{flow-q}
\begin{cases}
q_t(t,x)=f'(u(t,q(t,x)), & t>0,\quad x\in\R,\\
q(0,x)=x & x\in\R,
\end{cases}
\end{equation}
where $u$ is the solution of the problem~\eqref{genrod} given by Theorem~\ref{th:TYZ}.
Notice that the assumptions made on~$f$ and~$u$ imply that $q\in C^1([0,T^*)\times\R,\R)$ 
is well defined on the whole time interval $[0,T^*)$.

We now proceed putting the conditions  of \mbox{Theorem~\ref{th:blowup}\,\eqref{C1}}.
From~\eqref{genrodx}, the uniform convexity condition $f''\ge\gamma>0$, and summing up the two convolution estimates for ${\bf 1}_{\R^+}$ and ${\bf 1}_{\R^-}$
in~\eqref{ineq}, we get
\begin{equation}
\label{begest}
\begin{split}
\frac{d}{dt}\bigl[u_x(t,q(t,x))]
&=\Bigl[u_{tx}+f'(u)u_{xx}\Bigr](t,q(t,x))\\
&=-\frac{f''(u)}{2}u_x^2+g(u)-p*\Bigl(g(u)+\frac{f''(u)}{2}u_x^2\Bigr)\\
&\le \Bigl[-\frac{\gamma}{2}u_x^2+(1-\alpha)\bigl(g(u)-m\bigr)\Bigr](t,q(t,x)).
\end{split}
\end{equation}
By the definition of~$\alpha$~\eqref{choice-a} we see that $0<\alpha\le1$.
% $\alpha\ge1$ for $g\le0$
We can express $K$ in terms of $\alpha$ as
\begin{equation}
\label{relaK}
2K^2=\frac{1-\alpha}{\alpha^2}.
\end{equation}
The Lipschitz condition on 
\[\phi=\sqrt{\textstyle\frac{1}{\gamma}(g-m)}\]
provides the estimate 
$\phi(u)\le K|u-c|$.
We thus obtain
\begin{equation}
\label{calcc}
\begin{split}
\frac{d}{dt}\bigl[u_x(t,q(t,x))]
&\le -\frac{\gamma}{2}u_x^2+(1-\alpha) \gamma\phi(u)^2\\
&\le -\frac{\gamma}{2}u_x^2+(1-\alpha) K^2\gamma(u-c)^2\\
&=\frac{\gamma}{2}\biggl(\frac{(1-\alpha)^2}{\alpha^2}(u-c)^2-u_x^2\biggr).
\end{split}
\end{equation}
Let us set
\begin{equation}
 \label{beta-exp}
\beta:=\frac{1-\alpha}{\alpha}=2K^2\alpha=\textstyle\frac{1}{2}\Bigl(\sqrt{1+8K^2}-1\Bigr).
\end{equation}
Next introduce
\begin{equation}
\label{def:A}
A(t,x)=\bigl(\beta (u-c)-u_x\bigr)(t,q(t,x))
\end{equation}
and
\begin{equation}
\label{def:B}
B(t,x)=\bigl(\beta (u-c)+u_x\bigr)(t,q(t,x))
\end{equation}

We then obtain from~\eqref{calcc}
\begin{equation}
\label{lat}
\frac{d}{dt}\bigl[u_x(t,q(t,x))]
\le \frac{\gamma}{2}(AB)(t,x).
\end{equation}

On the other hand, the kernel~$p$ satisfies the identity (both in the distributional and a.e. pointwise sense)
\[
 \partial_xp=p{\bf 1}_{\R^-}-p{\bf 1}_{\R^+}.
\]
Then we get, recalling the inequality $f''\ge\gamma$,
\begin{equation}
\label{att}
\begin{split}
A_t(t,x)
&=\beta(u_t+f'(u)u_x)-(u_{tx}+f'(u)u_{xx})\\
&=\frac{f''(u)}{2}u_x^2-g(u)+(p-\beta\partial_xp)*\biggl(g(u)+\frac{f''(u)}{2}u_x^2\biggr)\\
&\ge \frac{\gamma}{2}u_x^2-g(u)+(1+\beta)p\mathbf{1}_{\R^+}*\biggl(g(u)+\frac{f''(u)}{2}\biggr)
  +(1-\beta)p\mathbf{1}_{\R^-}*\biggl(g(u)+\frac{f''(u)}{2}\biggr).
\end{split}
\end{equation}
We now would like to apply the convolution estimates~\eqref{ineq}.
This can be done, provided we have
$-1\le \beta\le1$. Such additional condition is equivalent to $\alpha\ge1/2$ and this last condition
is ensured by the restriction $0\le K\le1$ made in the assumptions of~\mbox{Theorem~\ref{th:blowup}\,-\eqref{C1}}.
%This is always fulfilled when $g\le0$, and in the case $g\ge0$ this is equivalent to the condition $\delta\le1$. 
Now applying estimates~\eqref{ineq} and reproducing the same calculations as  in~\eqref{calcc} gives 
\begin{equation}
\label{in-A}
\begin{split}
A_t(t,x)
&\ge \frac{\gamma}{2}u_x^2+(\alpha-1)\bigl(g(u)-m\bigr)\\
&= \frac{\gamma}{2}u_x^2-(1-\alpha)\gamma \phi(u)^2\\
&\ge\frac{\gamma}{2}\bigl(u_x^2-\beta^2(u-c)^2\bigr)\\
&=-\frac{\gamma}{2}(AB)(t,x).
\end{split}
\end{equation}

Similar computations yield the estimate
\begin{equation}
\label{in-B}
\begin{split}
B_t(t,x)
&\le -\frac{\gamma}{2}u_x^2+(1-\alpha)\bigl(g(u)-m\bigr)\\
&\le -\frac{\gamma}{2}u_x^2+(1-\alpha)\gamma\phi(u)^2 \\
&\le\frac{\gamma}{2}\bigl(\beta^2(u-c)^2-u_x^2\bigr)\\
&=\frac{\gamma}{2}(AB)(t,x).
\end{split}
\end{equation}

By our assumption on the initial datum made in Part~\eqref{C1} of Theorem~\ref{th:blowup},
\[
u_0'(x_0)<-\frac{1}{2}\Bigl(\sqrt{1+8K^2}\,-1\Bigr) |u_0(x_0)-c|.
\]
According to the definition of~$\beta$~\eqref{beta-exp}, this can be re-expressed as
\begin{equation*}
u_0'(x_0)<-\beta\bigl|u_0(x_0)-c\bigr|,
\end{equation*}
or, equivalently, as
\[A(0,x_0)>0\qquad\text{and}\qquad B(0,x_0)<0.\]
Let 
\[
\tau=\sup\bigl\{t\in[0,T^*)\colon A(\cdot,x_0)>0 \text{ and } B(\cdot,x_0)<0 \text{ on } [0,t]\bigr\}.
\]
By continuity, $\tau>0$. If $\tau<T^*$, then at least one of the inequalities $A(\tau,x_0)\le0$ and $B(\tau,x_0)\ge0$ must hold true.
This contradicts the fact that on the interval
$[0,\tau[$, we have $(AB)(\cdot,x_0)<0$, hence $A(\tau,x_0)\ge A(0,x_0)>0$ by~\eqref{in-A} and $B(\tau,x_0)\le B(0,x_0)<0$
by~\eqref{in-B}.
Thus $\tau=T^*$. 
Summarizing, we can say that during the whole existence time $[0,T^*)$:
\begin{equation}
\label{syst}
\begin{cases}
\text{$A(\cdot,x_0)$ is positive and increasing},\\
\text{$B(\cdot,x_0)$ is negative decreasing},\\
\text{$AB(\cdot,x_0)$ negative and decreasing}.
\end{cases}
\end{equation}

We are now in the position of proving that $T^*<\infty$:
let us consider
\[ h(t)=\sqrt{-(AB)(t,x_0)}.\]
Computing the time derivative of~$h$, next applying the differential inequalities~\eqref{in-A} and~\eqref{in-B} and
the geometric-arithmetic mean inequality $\frac{A-B}2(t,x_0)\ge h(t)$, we get
\[
 \begin{split}
 \frac{\dd h}{\dd t}(t)&=-\frac{A_tB+AB_t}{2\sqrt{-AB}}(t,x_0) \\
 &\ge \frac{\gamma\bigl(-AB\bigr)(A-B)}{4\sqrt{-AB}}(t,x_0)\\
 &\ge \frac{\gamma}{2}h^2(t).
 \end{split}
\]
But $h(0)=\sqrt{-AB(0,x_0)}>0$. Hence the solution blows up in finite time and $T^*< \frac{2}{\gamma h(0)}$.
Recalling the definitions of~$A$, $B$ we thus get the estimate for~$T^*$
\begin{equation*}  
 \label{est-T*}
 T^*\le \frac{2}{\gamma\sqrt{u_0'(x_0)^2-\beta^2\bigl(u_0(x_0)^2-c\bigr)^2}}
\end{equation*}
that agrees with that given in~\eqref{estT}

\medskip
The necessary changes to deal with the conditions of Part~\eqref{C2} of the theorem are slight.
First of all,  the relation between~$K$ and $\alpha$ is now
\begin{equation*}
\label{relaK2}
2K^2=\frac{\alpha-1}{\alpha^2}.
\end{equation*}
instead of~\eqref{relaK}.
On the other hand, owing to~\eqref{choice-a2}, we now have $\alpha\ge1$.
Then we can replace estimates~\eqref{begest}-\eqref{calcc} with
\[
 \begin{split}
\frac{d}{dt}\bigl[u_x(t,q(t,x))]
&=\Bigl[u_{tx}+f'(u)u_{xx}\Bigr](t,q(t,x))\\
&\le \Bigl[-\frac{\gamma}{2}u_x^2+(\alpha-1)\bigl(M-g(u)\bigr)\Bigr](t,q(t,x))\\
&\le -\frac{\gamma}{2}u_x^2+(\alpha-1) \gamma\psi(u)^2\\
&\le\frac{\gamma}{2}\biggl(\frac{(\alpha-1)^2}{\alpha^2}(u-c)^2-u_x^2\biggr).
\end{split}
\]
As the coefficient~$\beta$ now given by $\beta:=2K^2\alpha=\textstyle\frac{1}{2}\Bigl(1-\sqrt{1-8K^2}\Bigr)$, the required condition $-1\le\beta\le1$ does not bring any additional restriction on the Lipschitz constant~$K$.
The last part of the proof proceeds in the same manner. 
\end{proof}

\begin{remark}
 In some cases, the equality holds in~\eqref{ineq} and~\eqref{ineq2} for some specific choices of the function $u$.
 This happens ({\it e.g.\/} in the case~\eqref{C1}) when the functions $g$ and $f$ are such that
 \begin{equation}
  \label{D=0}
  2\alpha^2(\phi')^2=(1-\alpha)f'' \quad\text{in all points where $\phi=\sqrt{\textstyle\frac1\gamma(g-m)}$ does not vanish}.
  \end{equation}
Indeed, such condition tells us that the discriminants of $P$ and $\widetilde P$ vanish.
In this case, take $u$ such that, at some point $x_0$,
\begin{equation}
 \label{solit}
 \begin{cases}
  u_x(t,\xi)=\frac{\alpha g'(u(t,\xi))}{f''(u(t,\xi))}, &\text{if $\xi<x_0$},\\
  u_x(t,\xi)=-\frac{\alpha g'(u(t,\xi))}{f''(u(t,\xi))}, &\text{if $\xi>x_0$}.
 \end{cases}
 \end{equation}
Then we have $P(u_x(t,x_0))=\widetilde P(u_x(t,x_0))=0$.
This in turn implies that the equality holds at the point $x_0$ in~\eqref{ineq}.
In the particular case of  the Camassa--Holm equation, $f(u)=u^2/2$, $g(u)=u^2$ and $\alpha=1/2$.
Thus condition~\eqref{D=0} is satisfied. Next imposing the continuity of~$u$, the system~\eqref{solit} boils down to the differential equations defining the peaked solitons $u_c(x,t)=ce^{|x-ct|}$.
\end{remark}

\section{Futher consequences and conclusions }
We established the finite time blowup for solutions of the generalized rod equation
\begin{equation*}
 u_t-u_{txx}+[f(u)]_x-[f(u)]_{xxx}+\Bigl[g(u)+\frac{f''(u)}{2}u_x^2\Bigr]_x=0,
\end{equation*}
under appropriate conditions on the functions~$f$ and~$g$, provided the initial datum~$u_0$  ($s>3/2$), satisfies
\begin{equation*}
 \exists \, x_0\in\R\quad\text{such that}\quad
 u_0'(x_0)<-\beta\bigl|u_0(x_0)-c\bigr|,
 \end{equation*}
where $\beta$ and $c$ are two real constants depending on the shape of the functions~~$f$ and $g$.

In stating Theorem~\ref{th:blowup}, we considered for simplicity 
solutions $C([0,T^*),H^s(\R))\cap C^1([0,T^*),H^{s-1}(\R))$. These are known to uniquely exist
provided the initial datum is such that $u_0\in H^s(\R)$, with $s>3/2$.
In particular, such solutions are of finite energy and vanish as $|x|\to\infty$.

However, a closer look to the proof of Theorem~\ref{th:blowup} reveals that our arguments go through also in the case
of {\it infinite energy solutions\/}, possibly {\it non-vanishing\/} at infinity. 
Of course, the problem arises of finding a suitable functional setting in order to get the local existence and the uniqueness
of this type of solutions. This problem has been successfully addressed in~\cite{GHR2012}, at least in the Camassa--Holm case and for solutions admitting possibly distinct limits as  $x\to\pm \infty$.

Rather than restating Theorem~\ref{th:blowup} relying on well-posedness results more general than those in the $H^s$-setting,
let us assume that we are given {\it a priori\/} a solution $u\in C^1([0,T^*),C^2(\R))$ of the generalized rod equation, 
written in its non-local form~\eqref{genrod}, for some $0<T^*\le +\infty$.
We need to put also two {\it a priori\/} growth conditions on~$u$:
\begin{itemize}
\item[(i)] We first assume  that~$u$ is such that  the integrals in Lemma~\ref{lem:ineq} converge (in particular the convolution  term
in~\eqref{genrod} makes sense): this is a very mild  condition, that would allow us  to take into account also unbounded solutions
({\it e.g.\/} solutions with polynomial growth, when $g$ and $f$ are polynomials).
\item[(ii)]
We  assume that the flow map in~\eqref{flow-q} is well defined on the whole time interval~$[0,T^*)$:
this leads us to restrict ourselves to solutions~$u$ such that  $|f'\circ u|$ is bounded, uniformly on compact time intervals,
by an affine function of the~$x$ variable.
\end{itemize}
The above conditions, in particular, encompass the case of smooth  solutions $u$ to~\eqref{genrod}, such that
both $u(t,\cdot)$ and $u_x(t,\cdot)$ are bounded on~$\R$ (uniformly with respect to~$t$ on compact time intervals).

Hence, we obtain the following variant of~Theorem~\ref{th:blowup}:

\begin{theorem}
\label{th:nonvan}
Let $ f,g\in C^\infty(\R)$ with $f''\ge\gamma>0$.
Let $u\in C([0,T^*),W^{1,\infty}(\R))\cap C^1([0,T^*),C^2(\R))$, with $0<T^*\le\infty$,
be a solution of the Cauchy problem for the generalized rod equation~\eqref{genrod}. 
Assume that  at least one of the conditions~\eqref{C1} 
or~\eqref{C2} of Theorem~\ref{th:blowup} is fulfilled. Then $T^*<\infty$ and~$T^*$ is bounded from above by~\eqref{estT}.

In particular, if $g$ is constant and  $u_0$ is non-constant, then $u$ blows up in finite time.
\end{theorem}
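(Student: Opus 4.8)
The plan is to mimic the proof of Theorem~\ref{th:blowup} line by line, checking that the only two places where the $H^s$-setting was used --- namely the local well-posedness/approximation argument at the very beginning, and the existence of the flow map on $[0,T^*)$ --- are replaced by the hypotheses we now impose directly on $u$. First I would observe that the lemma, Lemma~\ref{lem:ineq}, was proved purely by pointwise manipulations and integration by parts against the kernel $p\mathbf 1_{\R^\pm}$; the only thing needed is that the integrals $\int_{-\infty}^x e^\xi(\cdots)(\xi)\dd\xi$ and $\int_x^{+\infty}e^{-\xi}(\cdots)(\xi)\dd\xi$ converge, which is exactly the \emph{a priori} growth condition~(i). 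So the convolution estimates~\eqref{ineq} and~\eqref{ineq2} hold verbatim for the solution $u$ in the statement of Theorem~\ref{th:nonvan}.

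Next I would introduce the flow map $q(t,x)$ by~\eqref{flow-q}; condition~(ii), that $|f'\circ u|$ is bounded on compact time intervals by an affine function of $x$, guarantees (by the standard Picard/Osgood argument for linearly growing vector fields) that $q\in C^1([0,T^*)\times\R,\R)$ is globally defined in $x$ and does not escape to infinity in finite time, so $q(t,x)$ makes sense for all $t\in[0,T^*)$. With $q$ in hand, differentiating~\eqref{genrod} in $x$ to get~\eqref{genrodx}, then composing with the flow and using $f''\ge\gamma$ together with the lemma, gives exactly the differential inequalities~\eqref{begest}--\eqref{calcc}, hence~\eqref{lat}; and the same computation applied to $A(t,x)=(\beta(u-c)-u_x)(t,q(t,x))$ and $B(t,x)=(\beta(u-c)+u_x)(t,q(t,x))$ gives~\eqref{in-A} and~\eqref{in-B}. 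Note that the regularity $u\in C([0,T^*),W^{1,\infty})\cap C^1([0,T^*),C^2)$ is precisely what is needed to justify that $t\mapsto u_x(t,q(t,x))$, $A$ and $B$ are $C^1$ in $t$ and that all the terms appearing (in particular $u_{xx}$, $u_{tx}$) are continuous. From here the conclusion is identical to the proof of Theorem~\ref{th:blowup}: under~\eqref{C1} or~\eqref{C2} the hypothesis on $u_0$ reads $u_0'(x_0)<-\beta|u_0(x_0)-c|$, i.e.\ $A(0,x_0)>0$, $B(0,x_0)<0$; the bootstrap with $\tau=\sup\{t:A(\cdot,x_0)>0,\ B(\cdot,x_0)<0\}$ forces $\tau=T^*$ and the system~\eqref{syst}; then $h(t)=\sqrt{-(AB)(t,x_0)}$ satisfies $\dd h/\dd t\ge\frac\gamma2 h^2$ with $h(0)>0$, so $T^*\le 2/(\gamma h(0))$, which is~\eqref{estT}.

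For the last assertion: if $g$ is constant, say $g\equiv m=M$, then $K=0$ and $\beta=0$, so the blowup condition~\eqref{blo:cond} becomes simply $u_0'(x_0)<0$ at some point $x_0$. If $u_0$ is non-constant then $u_0'$ is not identically zero, and since $u_0\in W^{1,\infty}$ with (from the equation, or just continuity) $u_0'\in C^1(\R)$, a non-constant $u_0$ must have $u_0'(x_0)<0$ somewhere --- indeed if $u_0'\ge 0$ everywhere then, were $u_0$ bounded, it would be monotone and bounded hence have finite limits at $\pm\infty$, but a bounded non-constant monotone $C^1$ function with $u_0'\in W^{1,\infty}$ need not have $u_0'<0$... so here one should be slightly careful: the cleanest route is to note that when $g$ is constant the lemma gives the cleaner estimate $(p\mathbf 1_{\R^\pm})*(g+\tfrac{f''}2 u_x^2)\ge g/2$, so~\eqref{genrodx} composed with the flow yields directly $\frac{d}{dt}u_x(t,q(t,x))\le -\frac\gamma2 u_x^2$ at \emph{every} point; if $u_x(t,q)$ ever becomes negative it blows up by the Riccati comparison, and if $u_0$ is non-constant then $u_0'(x_0)<0$ at some $x_0$ (a non-constant $C^1$ function which is bounded together with its derivative cannot be non-decreasing unless... ). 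The main obstacle I anticipate is exactly this elementary point --- making precise why "non-constant $u_0$ in $W^{1,\infty}$" implies "$u_0'<0$ somewhere"; I expect the intended argument is that a non-decreasing bounded $u_0$ with $u_0''\in L^\infty$ forces $u_0'\to 0$ at $\pm\infty$, and if in addition $u_0'\ge0$ and $u_0$ non-constant one still gets a contradiction via the energy-type conservation or via the observation that such a $u_0$ would make the right-hand side structure inconsistent; in any case, it reduces to a one-line remark once one invokes that a non-constant solution cannot have $u_x\ge 0$ identically preserved, since $\frac{d}{dt}u_x(t,q)\le-\frac\gamma2 u_x^2<0$ wherever $u_x\neq0$ drives $u_x$ strictly below zero immediately. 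So modulo this bookkeeping, no new idea beyond Theorem~\ref{th:blowup} is required.
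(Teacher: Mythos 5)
Your treatment of the main assertion is correct and is exactly the paper's (essentially unwritten) argument: Lemma~\ref{lem:ineq}, the differential inequalities for $A$ and $B$, and the Riccati argument for $h=\sqrt{-AB}$ are all pointwise computations along the flow, and the only roles played by the $H^s$ hypothesis in Theorem~\ref{th:blowup} were to guarantee convergence of the convolutions and the global-in-$x$ solvability of~\eqref{flow-q}, which Theorem~\ref{th:nonvan} now assumes outright via conditions (i) and (ii). The resulting bound $T^*\le 2/(\gamma h(0))$ is indeed~\eqref{estT}.

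The gap is in the final assertion, and it is a real one that your proposed patches do not close. First, it is simply false that a non-constant $u_0\in W^{1,\infty}(\R)\cap C^2(\R)$ must have a point where $u_0'<0$: take $u_0=\arctan$. (The implication does hold for decaying data, which is why the $H^s$ case of \cite{ConStra00} is easy, but Theorem~\ref{th:nonvan} is precisely about non-decaying data.) Second, your fallback --- that $\frac{d}{dt}u_x(t,q)\le-\frac{\gamma}{2}u_x^2<0$ ``drives $u_x$ strictly below zero immediately'' wherever $u_x\ne0$ --- is also wrong: if $w(0)>0$, the inequality $w'\le-\frac{\gamma}{2}w^2$ only yields $0<w(t)\le w(0)/(1+\frac{\gamma}{2}w(0)t)$, so $w$ may decay to $0^+$ without ever changing sign. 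What the constancy of $g$ actually buys is the cancellation $p*g=g$ in~\eqref{genrodx}, which upgrades the estimate to the identity
\[
\frac{d}{dt}\bigl[u_x(t,q(t,x))\bigr]=\Bigl[-\frac{f''(u)}{2}u_x^2-p*\Bigl(\frac{f''(u)}{2}u_x^2\Bigr)\Bigr](t,q(t,x))
\le-\frac{\gamma}{2}\bigl(p*u_x^2\bigr)(t,q(t,x)),
\]
and since $p>0$ on all of $\R$, the right-hand side is strictly negative at \emph{every} $x$ as soon as $u_x(t,\cdot)\not\equiv0$. Hence if $u_0'\not\equiv0$ and $u_0'(x_0)\le0$ at some point, then $u_x(t_1,q(t_1,x_0))<0$ for small $t_1>0$, and the main criterion (with $K=\beta=0$) applies from time $t_1$. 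This covers every non-constant datum \emph{except} those with $u_0'>0$ everywhere ($\arctan$-type profiles); for those, a further argument is still required (for instance, showing that a global solution would have to keep $u_x\ge0$ for all time and deriving a contradiction from the strict dissipation above), and neither your write-up nor a line-by-line transcription of the proof of Theorem~\ref{th:blowup} supplies it.
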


The very last statement of Theorem~\ref{th:nonvan} generalizes the known fact, \cite{ConStra00}, that for the classical rod equation in the limit case $\gamma=3$ (that corresponds to $g\equiv0$), any nonzero initial data gives rise to a solution that blows up.

The above discussion applies in particular to periodic solutions: hence, the statement of Theorem~\ref{th:blowup} 
remains valid for solutions~$u\in C([0,T^*),H^s(\S))\cap C^1([0,T^*),H^{s-1}(\S))$, with $s>3/2$, 
where $\S$ denotes the one-dimensional torus.
It should be pointed, however, that the estimates of Lemma~\ref{lem:ineq} (that are optimal for $u\in H^s(\R)$, at least for
a few specific choices of $f$ and~$g$) are no longer optimal when~$u\in H^s(\S)$. Improving such estimates would require the application
of variational methods. Therefore, we expect that in the case of the torus, the restriction on the Lipschitz constant~$K$ appearing
in Theorem~\ref{th:blowup} could be relaxed and the estimate on the coefficient~$\beta$ appearing in~\eqref{blo:cond} improved.
See~\cite{BraCor14} for results in this direction in the case of the classical period rod equation.

We finish this paper establishing three simple corollaries of our blowup results.
The first corollary establishes a relation between the behavior at the spatial infinity and the blow up.

\begin{corollary}
 \label{cor:decay}
 Let $ f,g\in C^\infty(\R)$ with $f''\ge\gamma>0$ be such that  at least one of the two following conditions is satisfied
 (the maps $\phi$ and $\psi$ are as in Theorem~\ref{th:blowup}):
 \begin{enumerate}
 \item
 \label{C11}
$\min_{\R} g=g(c)$ and $\phi$ is $K$-Lipschitz with~\mbox{$0\le K\le 1$}, or otherwise
\item
\label{C22}
$\max_{\R} g=g(c)$ and $\psi$ is $K$-Lipschitz with~\mbox{$0\le K\le \frac{1}{\sqrt8}$}
\end{enumerate}

If~$u\in C([0,T^*),W^{1,\infty}(\R))$ is a smooth solution of the
generalized rod equation~\eqref{gr-pde} arising from an initial datum~$u_0\not\equiv c$ such that
\begin{equation}
 \label{dec:cond}
\liminf_{x\to+\infty}e^{\beta x}\bigl(u_0(x)-c\bigr)\le 0
  \quad\text{and}\quad
  \limsup_{x\to-\infty}e^{-\beta x}\bigl(u_0(x)-c\bigr)\ge 0,
\end{equation}
where $\beta=\frac{1}{2}\bigl(\sqrt{1+8K^2}\,-1\bigr)$ (in the case~\eqref{C11}), or  
$\beta=\frac{1}{2}\bigl(1-\sqrt{1-8K^2}\bigr)$ (in the case~\eqref{C22}),
then $u$ must blow up in finite time.

In particular, a blow up occurs if $u_0\not\equiv c$ is such that 
$u_0(x)=c+o\bigl(e^{-\beta|x|}\bigr)$
for $|x|\to\infty$.
\end{corollary}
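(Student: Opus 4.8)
The plan is to reduce Corollary~\ref{cor:decay} to Theorem~\ref{th:nonvan} by showing that the decay hypothesis~\eqref{dec:cond} forces the existence of a point $x_0$ at which the pointwise blowup condition~\eqref{blo:cond} holds, i.e. $u_0'(x_0)<-\beta|u_0(x_0)-c|$. The natural device for this is to introduce the auxiliary functions $A_0(x)=\beta(u_0(x)-c)-u_0'(x)$ and $B_0(x)=\beta(u_0(x)-c)+u_0'(x)$ (the $t=0$ traces of $A$ and $B$ from~\eqref{def:A}--\eqref{def:B}) and observe that the blowup condition at $x_0$ is precisely $A_0(x_0)>0$ and $B_0(x_0)<0$. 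Equivalently, writing $w(x)=u_0(x)-c$, condition~\eqref{blo:cond} says $w'(x_0)+\beta|w(x_0)|<0$, which amounts to $\frac{d}{dx}\bigl(e^{\beta x}w(x)\bigr)<0$ and $\frac{d}{dx}\bigl(e^{-\beta x}w(x)\bigr)<0$ \emph{both holding} at the same point $x_0$ when $w(x_0)\neq 0$, and to $w'(x_0)<0$ when $w(x_0)=0$.

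First I would set $F_+(x)=e^{\beta x}\bigl(u_0(x)-c\bigr)$ and $F_-(x)=e^{-\beta x}\bigl(u_0(x)-c\bigr)$, both $C^1$ since $u_0\in C^1$. The hypothesis $\liminf_{x\to+\infty}F_+(x)\le 0$ together with $u_0\not\equiv c$ will be exploited as follows: either $F_+$ takes a strictly positive value somewhere (because $u_0-c\not\equiv 0$, and one also handles the case where $u_0-c\le 0$ everywhere by a symmetric argument using $F_-$ at $-\infty$), in which case, since $\liminf_{x\to+\infty}F_+\le 0$, the function $F_+$ must be strictly decreasing at some point $x_0$ to the right of where it is positive — more precisely there is a point where $F_+(x_0)>0$ and $F_+'(x_0)<0$. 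At such a point $w(x_0)=u_0(x_0)-c>0$, and $F_+'(x_0)<0$ unwinds to $u_0'(x_0)+\beta w(x_0)<0$, i.e. $u_0'(x_0)<-\beta|u_0(x_0)-c|$. The symmetric alternative uses $\limsup_{x\to-\infty}F_-(x)\ge 0$: if $u_0-c$ is somewhere strictly negative, there is a point $x_0$ with $F_-(x_0)<0$ and $F_-'(x_0)>0$, and $F_-'(x_0)>0$ reads $-\beta w(x_0)+u_0'(x_0)>0$ wait — one must be careful with signs here; with $w(x_0)<0$ this gives $u_0'(x_0)>\beta w(x_0)$, which is not yet~\eqref{blo:cond}, so the correct pairing is instead to use $\liminf_{x\to+\infty}F_+\le 0$ when $u_0-c$ is somewhere positive, and $\limsup_{x\to-\infty}F_-\ge 0$ when $u_0-c$ is somewhere negative but in the latter case extract from $F_-$ that it is strictly \emph{decreasing} as $x\to-\infty$ toward a nonnegative limsup while being negative, forcing a point with $F_-(x_0)<0$, $F_-'(x_0)<0$, and then $F_-'(x_0)<0$ gives $u_0'(x_0)<\beta w(x_0)=-\beta|u_0(x_0)-c|$. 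The only remaining configuration, $u_0-c\equiv 0$, is excluded by hypothesis; so in every case a valid $x_0$ is produced.

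Once such an $x_0$ is exhibited, one invokes Theorem~\ref{th:nonvan} (applicable because $u\in C([0,T^*),W^{1,\infty}(\R))\cap C^1([0,T^*),C^2(\R))$ and $f,g$ satisfy~\eqref{C1} or~\eqref{C2}; the Lipschitz restrictions on $K$ are exactly those assumed in~\eqref{C11}--\eqref{C22}) to conclude $T^*<\infty$. For the final assertion, if $u_0(x)=c+o(e^{-\beta|x|})$ as $|x|\to\infty$, then $e^{\beta x}(u_0(x)-c)=o(e^{\beta x}e^{-\beta x})=o(1)\to 0$ as $x\to+\infty$ and likewise $e^{-\beta x}(u_0(x)-c)\to 0$ as $x\to-\infty$, so~\eqref{dec:cond} holds with both the $\liminf$ and $\limsup$ equal to $0$, and the previous part applies.

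The main obstacle is purely bookkeeping: getting the sign conventions right so that the monotonicity extracted from the one-sided behavior of $F_\pm$ at $\pm\infty$ lands on the correct side of the inequality $u_0'(x_0)<-\beta|u_0(x_0)-c|$, and correctly splitting into the cases $u_0-c>0$ somewhere versus $u_0-c<0$ somewhere (with $u_0\not\equiv c$ guaranteeing at least one of these). There is no analytic difficulty — $F_\pm$ are $C^1$ and the existence of a point with prescribed value-sign and derivative-sign is elementary calculus (mean value theorem / behavior of a $C^1$ function whose limsup or liminf contradicts sustained monotonicity) — but the argument must be stated carefully, in particular noting that $\beta>0$ strictly (since $K>0$ is not assumed, one should also record that if $K=0$, $\beta=0$, the statement degenerates but $g$ is then constant and the conclusion follows from the last line of Theorem~\ref{th:nonvan}; alternatively one simply assumes the nontrivial case $\beta>0$).
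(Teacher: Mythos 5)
Your proposal is correct and rests on exactly the same monotonicity observation as the paper's own proof, just run in the contrapositive direction: the paper assumes $u$ is global, deduces $u_x\ge-\beta|u-c|$ everywhere from Theorem~\ref{th:nonvan}, and shows that $e^{\beta x}(u_0-c)$ (resp.\ $e^{-\beta x}(u_0-c)$) is then increasing on intervals where $u_0\ge c$ (resp.\ $u_0\le c$), contradicting~\eqref{dec:cond}, whereas you extract from~\eqref{dec:cond} a point where $u_0'(x_0)<-\beta|u_0(x_0)-c|$ and invoke Theorem~\ref{th:nonvan} directly. After your mid-course sign correction both cases ($u_0-c$ somewhere positive, somewhere negative) land on the right inequality, and the $K=0$ worry is harmless: in that case $g$ is constant, and either your fallback via the last line of Theorem~\ref{th:nonvan} or the general argument with $\beta=0$ still applies.
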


A result in the same direction as in Corollary~\ref{cor:decay} appeared in~\cite{Bra11} in the case of the Camassa--Holm equation (see also~\cite{HMPZ07} for earlier results in the same spirit),
where well-posedness issues in weighted spaces were thoroughly discussed.
However, the proof given in~\cite{Bra11} does not go through for equation~\eqref{genrod}, as  it deeply relies on Mc~Kean's necessary and sufficient condition for the global existence of solutions of the Camassa--Holm
equations.

The second corollary establishes a relation between blow up and sign changes for $u_0-c$.

\begin{corollary}
 \label{cor:u=c}
 Let $ f,g\in C^\infty(\R)$ as in Corollary~\ref{cor:decay} and let $u\in C([0,\infty),W^{1,\infty}(\R))$ be a global smooth solution of the rod equation ~\eqref{gr-pde}. Then, for all $t\ge0$, 
\begin{itemize}
 \item[i)] Either $u(t,x)>c$ for all $x\in\R$,
%\label{i)}
  \item[ii)] or $u(t,x)< c$ for all $x\in\R$,
 %\label{ii)}
  \item[iii)] or $\exists \,x_t\in\R$ such that $u(t,\cdot)\le c$ in $(-\infty,x_t]$ and $u(t,x)\ge c$ in $[x_t,+\infty)$.
 %\label{iii)}
 In this case, if $x\mapsto u(t,x)$ is equal to~$c$ at two distinct points of the real line, then $x\mapsto u(t,x)$
 must be constant~$=c$ in the whole interval between them.
\end{itemize}
\end{corollary}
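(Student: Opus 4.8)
The plan is to read the blowup criterion of Theorem~\ref{th:nonvan} in its contrapositive form along the whole time axis. Since equation~\eqref{genrod} is autonomous, for each fixed $t\ge0$ the function $(s,x)\mapsto u(t+s,x)$ is again a global solution of the same class, with datum $u(t,\cdot)\in W^{1,\infty}(\R)$; as it exists for all times, the blowup condition of Theorem~\ref{th:nonvan} must fail for this datum, so that
\[
u_x(t,x)\ge-\beta\,\bigl|u(t,x)-c\bigr|\qquad\text{for every }x\in\R,
\]
where $\beta$ is the constant from Corollary~\ref{cor:decay}. Letting $t$ range over $[0,\infty)$ and freezing it from now on, we are reduced to the study of a single function $w:=u(t,\cdot)-c\in C^1(\R)$ subject to the pointwise inequality $w'\ge-\beta|w|$.

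The heart of the matter is the elementary remark that this inequality yields $(e^{\beta x}w)'\ge0$ at every point where $w\ge0$, and $(e^{-\beta x}w)'\ge0$ at every point where $w\le0$. From the first one I would deduce that $w(a)>0$ forces $w>0$ on the whole half-line $[a,+\infty)$: if not, the intermediate value theorem produces a smallest $b>a$ with $w(b)=0$; then $w\ge0$ on $[a,b]$, so $e^{\beta x}w$ is nondecreasing there, whence $0=e^{\beta b}w(b)\ge e^{\beta a}w(a)>0$, a contradiction. Symmetrically, $w(a)<0$ forces $w<0$ on $(-\infty,a]$. I will call this the \emph{half-line property}.

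The trichotomy then follows at once. If $w>0$ everywhere we are in case~(i), and if $w<0$ everywhere in case~(ii). Otherwise $w$ takes both a nonpositive and a nonnegative value and so, by the intermediate value theorem, vanishes at some point $x_t$; the half-line property forces $w\le0$ on $(-\infty,x_t]$ (a point $y<x_t$ with $w(y)>0$ would give $w(x_t)>0$) and, likewise, $w\ge0$ on $[x_t,+\infty)$, which is case~(iii). Finally, the last assertion of case~(iii) is once more a direct consequence of the half-line property: if $w(y_1)=w(y_2)=0$ with $y_1<y_2$ and $w(y_0)\neq0$ for some $y_0\in(y_1,y_2)$, then $w(y_0)>0$ would give $w>0$ on $[y_0,+\infty)\ni y_2$, while $w(y_0)<0$ would give $w<0$ on $(-\infty,y_0]\ni y_1$, both impossible; hence $w\equiv0$, that is $u(t,\cdot)\equiv c$, on $[y_1,y_2]$. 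I expect the only delicate step to be the first one, namely justifying that Theorem~\ref{th:nonvan} is genuinely applicable at each time slice $t$ — this is where the standing smoothness of $u$ and the $W^{1,\infty}$ bound enter, securing the hypotheses preceding that theorem — together with the bookkeeping of which of the two admissible values of $\beta$ is in force; once the differential inequality $w'\ge-\beta|w|$ is in hand, the remainder is routine.
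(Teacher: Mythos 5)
Your proposal is correct and follows essentially the same route as the paper: negate the local-in-space blowup criterion of Theorem~\ref{th:nonvan} at each time slice to obtain $u_x(t,x)\ge-\beta\,|u(t,x)-c|$ for all $x$, deduce that $e^{\beta x}\bigl(u(t,x)-c\bigr)$ (resp.\ $e^{-\beta x}\bigl(u(t,x)-c\bigr)$) is nondecreasing on intervals where $u\ge c$ (resp.\ $u\le c$), and read off the trichotomy from the resulting half-line property. (The paper's displayed contrapositive carries a sign typo, $u_x\ge\beta|u-c|$, but its subsequent computations use exactly the inequality you derived, so the two arguments coincide.)
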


Our last corollary is a unique continuation result valid for \emph{periodic} solutions.
Let $\S=\R/\Z$ denote the one-dimensional torus. The local-in-time well posedness result of periodic solutions in
$C([0,T),H^s(\S))\cap C^1([0,T),H^{s-1}(\S))$ for $s>3/2$, together with the mean  $\int_\S u(t)\dd x=\int_\S u_0\dd x$
and the energy $\int_{\S} (u^2+u_x^2)(t)\dd x=\int_{\S}(u_0^2+(u_0)_x^2)\dd x$ conservation laws,
can be established following the steps of~\cite{TYZ14}.

\begin{corollary}
 \label{cor:per}
 Under the conditions of Corollary~\ref{cor:decay} for~$f$ and~$g$, the identically constant solution~$u\equiv c$ (where $c=\mbox{\text{\rm arg\,min\,$g$}}$,  or $c=\mbox{\text{\rm arg\,max\,$g$}}$) is the only  global, smooth and 
 spatially periodic solution  of the generalized rod equation~\eqref{gr-pde} with time-independent energy,  such that 
 $u_0(x_0)=c$ for some some $x_0\in\R$.
\end{corollary}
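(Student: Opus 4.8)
The plan is to reduce Corollary~\ref{cor:per} to Corollary~\ref{cor:u=c} (equivalently, to Theorem~\ref{th:nonvan}) by exploiting periodicity. Suppose $u$ is a global, smooth, spatially periodic solution with time-independent energy, and that $u_0(x_0)=c$ at some point $x_0\in\R$. First I would observe that a periodic solution falls within the scope of the discussion preceding Theorem~\ref{th:nonvan}: both $u(t,\cdot)$ and $u_x(t,\cdot)$ are bounded on $\R$ (uniformly on compact time intervals, by the energy conservation and the local well-posedness recalled just before Corollary~\ref{cor:per}), the convolution term in~\eqref{genrod} makes sense, and the flow map~\eqref{flow-q} is globally defined. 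Hence the trichotomy of Corollary~\ref{cor:u=c} applies to~$u$.

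Next I would rule out the cases (i) and (ii) of Corollary~\ref{cor:u=c}: since $u_0(x_0)=c$, the solution at $t=0$ attains the value~$c$, so neither $u_0>c$ everywhere nor $u_0<c$ everywhere can hold. Therefore case (iii) must hold at $t=0$, and moreover $u_0$ takes the value~$c$. The key point is now periodicity. In case (iii) one has $u_0\le c$ on $(-\infty,x_t]$ and $u_0\ge c$ on $[x_t,+\infty)$; but a function that is $\le c$ on a left half-line and $\ge c$ on a right half-line, \emph{and} is periodic, must be identically~$c$ (a non-constant periodic function cannot be eventually $\le c$ as $x\to-\infty$ and eventually $\ge c$ as $x\to+\infty$ unless it equals~$c$ everywhere, since periodicity forces its range on any period to coincide with its range on all of $\R$, which would then lie both in $(-\infty,c]$ and in $[c,+\infty)$). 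Alternatively, and perhaps cleaner: $u_0$ attains the value~$c$ at $x_0$, and by periodicity it attains~$c$ at $x_0+n$ for every $n\in\Z$, hence at two distinct points; the last statement of Corollary~\ref{cor:u=c}(iii) then forces $u_0\equiv c$ on the whole interval between them, and by periodicity again $u_0\equiv c$ on all of~$\R$.

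Finally, once $u_0\equiv c$, uniqueness for the Cauchy problem~\eqref{genrod} (note that $u\equiv c$ is indeed a solution, since for a constant $u$ one has $u_x\equiv0$ and $\partial_xp*g(c)=g(c)\partial_x\!\int p=0$) gives $u(t,\cdot)\equiv c$ for all $t\ge0$. This proves the corollary. I do not expect any serious obstacle here: the whole argument is a short deduction from Corollary~\ref{cor:u=c}, and the only point requiring a moment's care is the elementary observation that a periodic function squeezed between the two half-line sign conditions of case~(iii) must be constant — which is immediate either from the range argument or from the ``two equal values'' clause in Corollary~\ref{cor:u=c}(iii). The role of the hypothesis $u_0(x_0)=c$ is precisely to exclude the two strict-sign alternatives and to supply (via periodicity) two distinct points where $u_0=c$.
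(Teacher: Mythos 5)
Your argument is correct and, for its main part, coincides with the paper's: the paper also deduces Corollary~\ref{cor:per} from Corollary~\ref{cor:u=c}, excluding cases (i)--(ii) because $u_0(x_0)=c$, and then using periodicity together with the ``two distinct zeros of $u_0-c$'' clause of case (iii) to force $u_0\equiv c$ on a full period, hence everywhere. The only place where you diverge is the final step, passing from $u_0\equiv c$ to $u(t,\cdot)\equiv c$ for all $t$: you invoke uniqueness for the Cauchy problem (after checking that $u\equiv c$ is indeed a solution), whereas the paper stays inside the class of energy-preserving solutions and concludes from the conservation laws stated just before the corollary --- if $u_0\equiv c$ then $\int_\S(u^2+u_x^2)(t)=c^2$ and $\int_\S u(t)=c$, and Cauchy--Schwarz forces $u_x(t,\cdot)\equiv0$ and $u(t,\cdot)\equiv c$. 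This is why the hypothesis ``with time-independent energy'' appears in the statement; your route does not use it, but instead needs the uniqueness part of the periodic well-posedness theory, which the paper only asserts ``can be established following the steps of~\cite{TYZ14}'' and which is formulated in the $H^s(\S)$ class rather than for arbitrary global smooth periodic solutions (a smooth periodic solution does lie in $H^s(\S)$, so the gap is bridgeable, but it makes your ending slightly less self-contained than the paper's energy argument).
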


\begin{proof}[Proof of Corollary~\ref{cor:decay},~\ref{cor:u=c} and~\ref{cor:per}]
If $u$ is a global smooth solution in $C([0,\infty),W^{1,\infty}(\R))$ of the
generalized rod equation~\eqref{gr-pde}  then, by Theorem~\ref{th:nonvan}, for all $t\ge0$ and for all $x\in\R$ we must have
$u_x(t,x)\ge \beta|u(t,x)-c|$, with $\beta=\frac{1}{2}\bigl(\sqrt{1+8K^2}\,-1\bigr)$ (in the case~\eqref{C11}), or  
$\beta=\frac{1}{2}\bigl(1-\sqrt{1-8K^2}\bigr)$ (in the case~\eqref{C22}).

If $[a,b]$ is an interval where $u(t,x)\ge c$ for all $x\in[a,b]$, then we have 
\begin{align*}
 -\int_a^b\beta e^{\beta x}\bigl( u(t,x)-c\bigr)\dd x 
 &\le \int_a^be^{\beta x}u_x(t,x)\dd x\\
 &=e^{\beta b}\bigl(u(t,b)-c\bigr)-e^{\beta a}\bigl(u(t,a)-c\bigr)-\beta\int_a^be^{\beta x}(\bigl(u(t,x)-c\bigr)\dd x.
\end{align*}
This implies that
$e^{\beta a}\bigl(u(t,a)-c\bigr)\le e^{\beta b}\bigr(u(t,b)-c\bigr)$. 
 
If $u(t,\cdot)\le c$ on $[a,b]$, then applying again Theorem~\ref{th:nonvan} we have $u_x(t,x)\ge \beta_\gamma \bigl(u(t,x)-c\bigr)$.
This in turn implies
\begin{align*}
 \int_a^b \beta e^{-\beta x}  \bigl(u(t,x)-c\bigr)\dd x 
 &\le \int_a^b e^{-\beta x}u_x(t,x)\dd x\\
 &=e^{-\beta b}\bigl(u(t,b)-c\bigr)-e^{-\beta a}\bigl(u(t,a)-c\bigr)
 +\beta\int_a^be^{-\beta x}\bigl(u(t,x)-c\bigr)\dd x.
\end{align*}
We then conclude that
$e^{-\beta a}\bigl(u(t,a)-c\bigr)\le e^{-\beta b}\bigl(u(t,b)-c\bigr)$.

Summarizing, we proved that $x\mapsto e^{\beta x} \bigl(u(x,t)-c\bigr)$ is monotone increasing in any interval where
$u(\cdot, t)\ge c$ and $x\mapsto e^{-\beta x} \bigl(u(x,t)-c\bigr)$ is monotone increasing in any interval 
where $u(\cdot,t)\le c$.

Corollary~\ref{cor:decay},~\ref{cor:u=c} and~\ref{cor:per} now easily follow from these monotonicity properties.
Indeed,  if $u_0(x_0)>c$ at some point $x_0\in\R$, then for all $x>x_0$, we have $u_0(x)-c>e^{-\beta(x-x_0)}(u_0(x_0)-c)$,
hence  $\liminf_{x\to+\infty}e^{\beta x}\bigl(u_0(x)-c\bigr)> 0$.  
In the same way, we see that if $u_0(x_0)<c$, at some point $x_0$, then 
$\limsup_{x\to-\infty}e^{-\beta x}\bigl(u_0(x)-c\bigr)< 0$.
The claim of Corollary~\ref{cor:decay} immediately follows.
On the other hand, if we exclude the two situations~{i)} and~{ii)} in Corollary~\ref{cor:u=c}, 
then $u(t,x_t)=c$ at some point $x_t\in \R$; in the interval $(-\infty, x_t]$ we must have $u(t,x)\le c$:
otherwise, if $u-c$ is strictly positive at some point $x'<x_t$, then 
we would get,  for all $x\ge x'$, $u(t,x)-c>e^{-\beta(x-x')}(u(t,x')-c)$
and in particular $u(t,x_t)$ could not be equal to~$c$.
In the same way, we see that we must have $u(t,x)\ge c$
in the interval $[x_t,\infty)$. Such argument proves also that as soon $u(t,\cdot)$ vanishes at two points $x_t<y_t$,
then $u(t,\cdot)$ must vanish in the whole interval $[x_t,y_t]$.
The claim of Corollary~\ref{cor:u=c} follows.
In the periodic case, if $u$ is a global smooth solution such that $u_0(x_0)=c$ at some point~$x_0$, then by Corollary~\ref{cor:u=c}  $u_0\equiv c$. Inside the class of energy-preserving solutions, this in turn implies that $u\equiv c$.
Corollary~\ref{cor:per} is thus established.
\end{proof}

In the special case of the Camassa--Holm equation without dispertion ($\kappa=0$), Corollary~\ref{cor:u=c}
ensures that a sufficient condition for the breakdown is that the
initial data satisfy the sign condition \emph{$\exists\, x_1<x_2$ such that $u_0(x_1)>0>u_0(x_2)$}. 
This should be compared with the blowup condition of McKean's theorem, that reads:
  \emph{$\exists\, x_1<x_2$ such that $y_0(x_1)>0>y_0(x_2)$}, where $y_0=u_0-(u_0)_{xx}$ is the associated potential. 
 McKean's theorem is more precise, as it provides a {\it necessary and sufficient\/} condition  for the wave-breaking of the dispersionless Camassa--Holm equation, see~\cite{McKean04}.
However, as already observed, McKean's arguments deeply rely on the persistence properties of the sign of the potential $y$ during the evolution --- a byproduct of the bi-hamiltonian structure --- that are no longer valid for the general equation~\eqref{genrod}.

%%%%%%%%%%%%%%%%%%%%%%%%%%%%%%%
%%%%%%%%%%%%%%%%%%%%%%%%%%%%%%%
%%%%%%%%%%%%%%%%%%%%%%%%%%%%%%%

\end{document}